\theoremstyle{plain}
\newtheorem{thm}{Theorem}[section]
\newtheorem{lem}[thm]{Lemma}
\newtheorem{prp}[thm]{Proposition}
\newtheorem{cor}[thm]{Corollary}
\theoremstyle{definition}
\newtheorem{rmk}[thm]{Remark}
\newtheorem{exa}[thm]{Example}
\numberwithin{equation}{section}
\newcommand{\xymat}{\SelectTips{cm}{}\xymatrix}
\newcommand{\abs}[1]{{\lvert #1 \rvert}}
\newcommand{\set}[1]{\left\{#1\right\}}
\newcommand{\into}{\hookrightarrow}
\newcommand{\onto}{\twoheadrightarrow}
\newcommand{\A}{{\ensuremath{\mathcal A}}}
\newcommand{\F}{{\mathcal F}}
\newcommand{\ba}{{\mathbf a}}
\newcommand{\X}{{\mathfrak X}}
\newcommand{\kk}{{C}}
\newcommand{\Z}{{\mathbb Z}}
\newcommand{\Q}{{\mathbb Q}}
\newcommand{\C}{{\mathbb C}}
\renewcommand{\O}{{\mathcal O}}
\newcommand{\chihom}{{\chi^+}}
\newcommand{\OSigma}{{\overline{\Sigma}}}
\newcommand{\oma}{{\omega_\ba}}
\renewcommand{\P}{{\mathbb P}}
\DeclareMathOperator{\rank}{rank}
\DeclareMathOperator{\codim}{codim}
\DeclareMathOperator{\Der}{Der}
\DeclareMathOperator{\ch}{ch}
\DeclareMathOperator{\td}{td}
\DeclareMathOperator{\id}{id}
\begin{document}

\title{A geometric deletion-restriction formula}

\author{Graham Denham}
\address{
G.~Denham\\
Department of Mathematics\\
University of Western Ontario\\
London, ON N6A 5B7\\
Canada
}
\email{gdenham@uwo.ca}
\thanks{GD was partially supported by a grant from NSERC of Canada.}

\author{Mehdi Garrousian}
\address{
M.~Garrousian\\
Department of Mathematics\\
University of Western Ontario\\
London, ON N6A 5B7\\
Canada
}
\email{mgarrou@uwo.ca}

\author{Mathias Schulze}
\address{
M.~Schulze\\
Department of Mathematics\\
Oklahoma State University\\
Stillwater, OK 74078\\
United States}
\email{mschulze@math.okstate.edu}
\thanks{}

\date{\today}


\begin{abstract}
In this paper, we recover the characteristic polynomial of an arrangement of hyperplanes by computing the rational equivalence class of the variety defined by the logarithmic ideal of the arrangement. The logarithmic ideal was introduced in \cite{CDFV09} in a study of the critical points of the master function. The above result is used to understand the asymptotic behavior the Hilbert series of the logarithmic ideal. As an application, we prove the Solomon-Terao formula under the tame hypothesis by identifying each side of the formula with a certain specialization of the Hilbert series of the logarithmic ideal.
\end{abstract}

\subjclass{52C35, 16W25}

\keywords{hyperplane arrangement, logarithmic form, Chow ring, Tutte polynomial}

\maketitle
\tableofcontents

\section{Introduction}

The characteristic polynomial is a ubiquitous combinatorial invariant
of constructions associated with hyperplane arrangements.  For example, its coefficients are Whitney numbers of a lattice, as well as Betti numbers of the complement of a complex arrangement~\cite{OS80}.  Over a finite field, the polynomial counts rational points in an arrangement complement: see, e.g. \cite{At96}.  For real arrangements, the coefficients are average projection volumes: see \cite{KS11}.  For
certain complex arrangements (locally free), the coefficients are Chern numbers of the sheaf of logarithmic $1$-forms: \cite{MuSc01}.
For any arrangement, its Chern-Schwartz-MacPherson class is given by its characteristic polynomial: \cite{Al11}.
The main result of this paper adds yet another example to the (incomplete) list above of formulas for the characteristic polynomial of a hyperplane arrangement, this time via the rational equivalence 
class of a biprojective variety associated with the following problem.

Fix a central rank-$\ell$ arrangement $\A$ of $n$ hyperplanes in a $m$-dimensional complex vector space $V$ defined by $n$ linear functions $f_1,\ldots,f_n\in V^*$ and let $f=f_1\cdots f_n$. 
Denote the hyperplanes by $H_i=\ker(f_i)$ for $1\leq i\leq n$.
Given a vector of weights $\lambda=(\lambda_1,\ldots,\lambda_n)\in\C^n$, we consider
the {\em master function}
\begin{equation}\label{eq:defPhi}
\Phi_\lambda=\prod_{i=1}^n f_i^{\lambda_i}.
\end{equation}
This multi-valued function has zeros and poles on the variety $\bigcup_{i=1}^n H_i$ defined by $\A$; accordingly let $M=V\setminus\bigcup_{i=1}^n H_i$.  

We denote the set of critical points of $\Phi_\lambda$ on $M$ by
\[
\Sigma_\lambda=\set{x\in M\mid d\Phi_\lambda(x)=0}.
\]
For suitable arrangements $\A$ and choices of weight $\lambda$, the
the critical points of the master function index a basis of solutions
to a physically significant PDE: see, for example, \cite{RV95,SV03,Va03}. 
The core of the theory depends on properties of hyperplane arrangements, as Varchenko shows in \cite{Va11}. 
This has been the primary motivation for a study of the critical points of master functions in \cite{Va95,OT95,Silv96,CDFV09}. 
The question of finding extremal values of \eqref{eq:defPhi} in the case of real defining equations $\set{f_i}$ is closely related, and complementary results have been obtained in \cite{CHKS06,HKS05}, motivated by applications in algebraic statistics.

Let 
\begin{equation}\label{eq:omega_a}
\omega_\ba=\sum_{i=1}^n a_i\frac{df_i}{f_i},
\end{equation}
where $a_1,\ldots,a_n$ are coordinate functions on $\C^{n}$ as the space of weights. Let $\omega_\lambda$ denote the specialization of $\omega_\ba$ with $a_i=\lambda_i$ for $1\leq i\leq n$: then $\omega_\lambda$ is the
logarithmic derivative of $\Phi_\lambda$, and we see
$\Sigma_\lambda=\set{x\in M\mid\omega_\lambda(x)=0}$.
In order to consider critical sets of \eqref{eq:defPhi} for a fixed arrangement and all $\lambda\in\C^{n}$, let $\Sigma(\A)$ be the subvariety of
$M\times \C^{n}$ given by the vanishing of $\omega_\ba$, and let $\OSigma(\A)$ be its closure in $V\times\C^{n}$.  The diagonal actions of $\C^*$ on $V$ and $\C^{n}$ preserve $\OSigma(\A)$, so we
let 
\[
\X(\A)=\OSigma(\A)/(\C^*\times\C^*),
\]
a subvariety of $\P V\times \P^{n-1}$.
These varieties were introduced in \cite{CDFV09} and studied further in \cite{DS10}: in particular, $\X(\A)$ is irreducible, has codimension $\ell=\rank\A$, and is smooth over $\P M\times\P^{n-1}$.

The variety $\X(\A)$
can be described using the module $D(\A)$ of logarithmic derivations along $\A$:
by \cite[Thm.~2.9]{CDFV09}, it is the (biprojective) zero-locus of the ideal $I(\A)=\langle D(\A),\oma\rangle$ obtained by contraction of $\oma$ along logarithmic vector fields.  (Details appear in \S\ref{ss:critical}).  An arrangement $\A$ is said to be free
if $D(\A)$ is a free module.
By \cite[Thm.~2.13]{CDFV09}, the ideal $I(\A)$ is a complete intersection if and only $\A$ is free.  
Moreover, in this case, $I(\A)$ is generated in bidegrees $\set{(d_i,1)\mid1\leq i\leq \ell}$, where the numbers $\set{d_i+1}$, indexed in nondecreasing order, are called the exponents of $\A$.  (We assume for the moment that $m=\ell$.)


Now consider the Chow ring
\[
A^\bullet=A^{\bullet}(\P V\times \P^{n-1})\cong\Z[h,k]/(h^\ell,k^n)
\]
where $h=[H]$ and $k=[K]$ denote the classes of hyperplanes $H$, $K$ in $\P V$ and $\P^{n-1}$ respectively (see, e.g., \cite[Ex.~8.4.2]{FultonBook}).
If $\A$ is free, from the above-mentioned degrees of the generators of $I(\A)$, we compute 
\begin{equation}\label{eq:Xfac}
[\X(\A)]=\prod_{i=1}^\ell\big(d_i h+k\big)\in A^\bullet.
\end{equation}

There is an analogous result for the characteristic polynomial $\chi(\A,t)$ of $\A$.
Terao's Factorization Theorem~\cite{Te81} states that, if $\A$ is free, then the characteristic polynomial factors as
\begin{equation}\label{eq:chifac}
\chi(\A,t)=\prod_{i=1}^\ell(t-d_i-1)\in\Z[t].
\end{equation}

By comparison of \eqref{eq:Xfac} and \eqref{eq:chifac}, we deduce that
\[
[\X(\A)]=\chihom(-h,k-h),
\]
where $\chihom(\A,s,t)=s^\ell\chi(\A,t/s)$ denotes the homogenized characteristic polynomial.
Our main result is that this formula holds in general.

\begin{thm}\label{thm:intcycle}
For any central arrangement \A, we have 
\[
[\X(\A)]=\chihom(\A,-h,k-h)\in A^\bullet
\]
That is, the cycle of the variety $\X(\A)$ determines the characteristic polynomial of $\A$.
\end{thm}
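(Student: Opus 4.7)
The plan is to prove Theorem~\ref{thm:intcycle} by induction on the number $n$ of hyperplanes of $\A$, by establishing a geometric deletion-restriction identity for $[\X(\A)]$ that mirrors the classical deletion-restriction relation
\[
\chi(\A,t)=\chi(\A',t)-\chi(\A'',t)
\]
for the deletion $\A'=\A\setminus\{H_n\}$ and the restriction $\A''=\A^{H_n}$. Homogenizing this relation and substituting $s=-h$, $t=k-h$ yields the polynomial identity
\[
\chihom(\A,-h,k-h)=\chihom(\A',-h,k-h)+h\cdot\chihom(\A'',-h,k-h),
\]
so it will suffice to prove the matching class identity in $A^\bullet(\P V\times\P^{n-1})$, with the contributions from the smaller arrangements understood as pushforwards along the natural inclusions $\P V\times\P^{n-2}\hookrightarrow\P V\times\P^{n-1}$ (sending $\X(\A')$ into the divisor $\{a_n=0\}$) and $\P H_n\times\P^{n''-1}\hookrightarrow\P V\times\P^{n-1}$.

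For the base case I would take $\A$ to be any free arrangement for which the identity is already transparent: the empty arrangement (where both sides reduce to $1$), or a boolean arrangement, where \eqref{eq:Xfac} and \eqref{eq:chifac} already furnish the formula.

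The essential content is the inductive step, which asks for a cycle-theoretic decomposition of $[\X(\A)]$. My plan is to analyze $\X(\A)$ by specializing the weight $a_n$: the ideal $I(\A)+(a_n)$ cuts out $\X(\A)\cap\{a_n=0\}$, and since $\oma$ restricts to $\omega_{\ba'}$ on $\{a_n=0\}$, this scheme contains $\X(\A')$ as a component (the \emph{deletion} contribution). The residual component should be supported over the hyperplane $\P H_n\subset\P V$: the inclusion $D(\A)\subset D(\A')$ is precisely the inclusion of derivations tangent to $H_n$, and the quotient, obtained by evaluating at $H_n$, is controlled by the module $D(\A'')$. This should identify the residual component with (an embedded image of) $\X(\A'')$, whose class acquires the factor of $h$ from the codimension of $\P H_n$ in $\P V$ (the \emph{restriction} contribution). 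Reassembling these pieces should yield the desired class identity.

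The main obstacle is to justify this cycle decomposition rigorously: one must identify the residual component on the nose, control excess intersection along the locus where $\X(\A')$ meets the new component, and track codimensions and multiplicities carefully so that the identity holds inside $A^\bullet(\P V\times\P^{n-1})$ rather than only up to torsion by $k$. A clean route is to build a short exact sequence of graded modules (or ideal sheaves) relating $I(\A)$, $I(\A')$, and $I(\A'')$ out of the inclusion $D(\A)\hookrightarrow D(\A')$ and its cokernel, and then extract the class identity from a Grothendieck-group or Chern-character calculation. Once this geometric deletion-restriction is in place, induction together with the classical identity for $\chi$ closes the proof.
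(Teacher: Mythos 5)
Your overall strategy---induction on $\abs{\A}$ via a geometric deletion--restriction obtained by intersecting $\X(\A)$ with the divisor $\{a_H=0\}$---is exactly the route the paper takes, but your proposal stops short at precisely the points where the work lies. The central claim, that $\OSigma(\A)\cap V(a_H)$ decomposes as the union of $\OSigma(\A')$ and a residual piece coming from $\A''$, meeting generically transversally (so with multiplicity one), is the paper's Theorem~\ref{th:delres}; it is proved there by a stratified analysis over the flats $X$, using the vector-bundle structure of $\Sigma(\A)$ over $M$, irreducibility, and dimension counts from \cite{CDFV09}---not by any exact sequence. Your proposed substitute, a short exact sequence relating $I(\A)$, $I(\A')$, $I(\A'')$ built from $D(\A)\subset D(\A')$ together with a Grothendieck-group or Chern-character computation, is speculative: no such sequence is exhibited, and the cokernel of $D(\A)\subset D(\A')$ does not directly present $D(\A'')$ for non-free arrangements, which is exactly where the difficulty sits. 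Moreover, your identification of the restriction contribution as a pushforward along an inclusion $\P H\times\P^{n''-1}\hookrightarrow\P V\times\P^{n-1}$ is not correct as stated: the weight spaces are related by the linear \emph{projection} $\sigma\colon\C^{\A'}\onto\C^{\A''}$ (summing coordinates of hyperplanes with the same restriction), so the residual variety is $\iota\rho\,\sigma^{-1}(\OSigma(\A''))$, and making sense of its class requires the pullback along the rational map induced by $\sigma$ (the paper's Lemma~\ref{lem:ratpullback} and \eqref{eq:qpullback}).

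Two further cases are missing. First, when $H$ is a bridge the intersection with $V(a_H)$ is not proper (indeed $a_H\in I(\A)$ already), so the relation must be replaced by $[\X(\A)]=k\cdot[\X(\A')]$; and $\A'$ is then non-essential, so the induction needs the reduction to essential arrangements of Remark~\ref{rem:essential}, which itself relies on the vanishing of the $h^{\ell}$-coefficient of $[\X(\A)]$ (Lemma~\ref{lem:monic}). Second, even in the non-bridge case the decomposition only yields
\[
k\cdot[\X(\A)]=k\cdot[\X(\A')]+hk\cdot\sigma^*[\X(\A'')],
\]
and while you correctly flag that one must cancel the factor $k$, when $n=\ell+1$ multiplication by $k$ annihilates $k^{\ell}$ in $A^\bullet$, so the coefficient of $k^{\ell}$ in $[\X(\A)]$ must be computed independently; the paper does this in Lemma~\ref{lem:monic}, using the rank-$(n-\ell)$ vector-bundle structure of $\Sigma(\A)$ over $M$, and matches it against the monicity of $T_\A(x,0)$. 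Without these ingredients the induction does not close; with them, your outline coincides with the paper's argument.
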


On the other hand, we note that the variety
$\X(\A)$ itself is not a purely combinatorial object: there exist
arrangements $\A_1$, $\A_2$ with the same characteristic polynomial (indeed, the same underlying matroid) for which the varieties $\X(\A_i)$ are not isomorphic as subvarieties of $\P^2\times\P^8$ (Example~\ref{ex:Yu93}).

As an application, we use the Hirzebruch-Riemann-Roch
formula to describe the asymptotic behaviour of the Hilbert series
of the defining ideal $I(\A)$ of the variety $\X(\A)$: see 
Theorem~\ref{th:HS}.  In this way, the highest-degree terms of
the Hilbert polynomial and Hilbert series of $I(\A)$ are seen to
be reparameterizations of the characteristic polynomial, while the
lower-degree terms are not combinatorially determined: again,
the arrangements of Example~\ref{ex:Yu93} have $h(\Omega^1(\A_1),t)\neq h(\Omega^1(\A_2),t)$.  

In \cite{ST87}, Solomon and Terao express the characteristic polynomial formula for a central hyperplane arrangement \A\ in terms of a specialization of an alternating sum of Hilbert series of the modules of logarithmic forms, $\Omega^\bullet(\A)$.  This is necessarily somewhat delicate, in view of the remarks above.
However, if $\A$ is a free arrangement, the modules of logarithmic derivations form a free resolution of $I(\A)$. 
More generally, if $\A$ is a tame arrangement, one has an exact complex of logarithmic forms \eqref{eq:logcplx}, by \cite[Thm.~3.5]{CDFV09}.  In this case, an Euler characteristic argument, together with the results above, gives a geometric proof of Solomon and Terao's famous formula, Theorem~\ref{th:st}.  

\section{Background and notation}\label{sec:background}

Let $\A$ be a central arrangement in a complex vector space $V$ and set 
\[
m=\dim V,\quad n=\abs{\A}.
\]
For each $H\in\A$, we choose $f_H\in V^*$ with $H=\ker f_H$.
We further pick an ordering $\A=\{H_1,\dots,H_n\}$ and set $f_i=f_{H_i}$.

We refer to the book of Orlik and Terao~\cite{OTbook} for the notation and terminology of hyperplane arrangements not given here. 
We will use Fulton's book~\cite{FultonBook} as our reference for notation and basic facts about intersection theory.

\subsection{Combinatorics}

Let $L(\A)$ denote the intersection lattice of $\A$, ordered by reverse inclusion.
The {\em rank} of $X\in\A$ is, by definition, the codimension of $X$ in $V$, for which we wite $\rank(X)$.
By definition, the rank 
\[
\ell=\rank\A
\]
of $\A$ is $\rank(W)$, where $W=\bigcap_{H\in\A}H$ is the maximal element of $L(\A)$. 
If $W=\set{0}$, the arrangement is called {\em essential} (in which
case $\ell=m$.)
Then the characteristic polynomial of $\A$ is defined to be
\begin{equation}\label{eq:chpoly}
\chi_{\A}(t):=\sum_{X\in L(\A)}\mu(V,X)t^{\ell-\rank X},
\end{equation}
where $\mu$ denotes the M\"obius function of $L(\A)$.
(Note that our definition of $\chi_\A(t)$ is the conventional one for matroids; however, this differs for non-essential arrangements from the definition of \cite[Def.~2.52]{OTbook}.)

For each $H\in\A$, the deletion and restriction at $H$ are hyperplane arrangements in $V$ and $H$, respectively, defined by $\A':=\A\setminus \set{ H}$ and $\A'':=\set{H\cap H'\mid H'\in \A'}$. 
$H$ is called a {\em bridge} (or separator) if $\rank(\A')<\rank(\A)$.
This means that $H$ is not in the span of the hyperplanes of $\A'$, so $H$ is a bridge if and only if $\A'$ is not essential.

For each $H\in\A$, the characteristic polynomial satisfies the
``deletion-restriction'' recurrence relation:
\begin{equation}\label{eq:delres}
\chi(\A,t)=\chi(\A',t)-\chi(\A'',t);
\end{equation}
if $H$ is a bridge, this simplifies to $\chi(\A,t)=(t-1)\chi(\A',t)$.

A hyperplane arrangement is a matroid representation, and so it has a 
Tutte polynomial, which we denote by $T_\A(x,y)$. 
The characteristic polynomial is the univariate specialization 
\begin{equation}\label{eq:tutte}
T_\A(x,0)=(-1)^\ell\chi(\A,1-x).
\end{equation}
Denote by $t_{ij}$ the coefficient of $x^iy^j$ in $T_\A(x,y)$. 
Recall that $t_{ij}\ge0$ and $t_{00}=0$ (see, e.g., \cite[Thm.~6.2.13]{BrOx92}), as well as 
\begin{equation}\label{eq:Thomog}
\chihom(\A,-s,t-s)=\sum_{i=1}^\ell t_{i0}s^{\ell-i}t^i.
\end{equation}

\subsection{Logarithmic forms and derivations}\label{sec:log}

Let $R=\C[V]$, the coordinate ring of $V$.
The module of logarithmic $p$-derivations of $\A$ is defined by 
\[
D_p(\A)=\set{\theta\in\bigwedge^p\Der_\C(R)\mid\forall g_2,\dots, g_m\in R\colon\theta (f,g_2,\dots,g_m)\in (f)}.
\]
where $\Der_\C(R)$ is the module of $\C$-linear $p$-derivations over the ring $R$ which consist of $p$-linear skew-symmetric maps $\theta:R^p\rightarrow R$ which satisfy the Leibniz rule in every factor.
Dually, for $0\leq p\leq m$, the module of logarithmic $p$-forms is, by definition,
\[
\Omega^p(\A)=\set{\omega\in\Omega^p_{R/\C,f}\mid f\omega\in\Omega^p_{R/\C},fd\omega\in\Omega^{p+1}_{R/\C}},
\]
where $\Omega^p_{R/\C}$ is the module $\C$-linear K\"{a}hler differential $p$-forms.

Via the action of $\Der_\C(R)$ on $R$, $D(\A)$ inherits a grading that induces gradings of $D_p(\A)$ and $\Omega^p(\A)$.
In particular, the Euler derivation has degree $0$.

Solomon and Terao \cite{ST87} established a remarkable formula for the characteristic polynomial of an arrangement, expressed in terms of the Hilbert series of the modules of logarithmic derivations.
Let $h(-,t)$ denote Hilbert series (in the $R$-variables). 

Since our grading convention differs from that of Solomon and Terao by a shift of degree $p$, their formula becomes:

\begin{thm}[\cite{ST87}]\label{th:st}
The formal power series
\[
\Psi_\A(s,t)=\sum_{p=0}^m h(D_p(\A),t)t^p(s(1-t)-1)^p
\]
is, in fact, a Laurent polynomial in $\Z[s,t,t^{-1}]$, and its specialization $\Psi_\A(s,1)=(-1)^m\chi(\A,-s)$.
\end{thm}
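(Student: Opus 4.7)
The plan is to identify both sides of the Solomon--Terao identity with specializations of a common object---the bigraded Hilbert series of the coordinate ring $R[\ba]/I(\A)$ of $\OSigma(\A)$, with $t$ tracking the $R$-grading and $u$ tracking the $\ba$-grading. The Euler characteristic of the tame logarithmic complex will produce the series on the left; Theorem~\ref{thm:intcycle} together with the asymptotic analysis of Theorem~\ref{th:HS} will evaluate the specialization that recovers the characteristic polynomial on the right.

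Under the tame hypothesis, \cite[Thm.~3.5]{CDFV09} provides an exact complex of bigraded $R[\ba]$-modules whose $p$th term is $D_p(\A)\otimes_R R[\ba]$ (after an $\ba$-degree shift by $p$) with differential contraction by $\omega_\ba$. Since $\omega_\ba$ has bidegree $(0,1)$, the alternating sum of bigraded Hilbert series yields
\[
h(R[\ba]/I(\A);t,u)\,(1-u)^n=\sum_{p=0}^\ell(-1)^p u^p h(D_p(\A),t).
\]
The substitution $u=t(1-s(1-t))$, for which $1-u=(1-t)(1+st)$ and $(-u)^p=t^p(s(1-t)-1)^p$, transforms the right-hand side into $\Psi_\A(s,t)$. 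Writing the Hilbert series in reduced form $h(R[\ba]/I(\A);t,u)=N(t,u)/((1-t)^m(1-u)^n)$ for a numerator polynomial $N$, this gives
\[
\Psi_\A(s,t)=\frac{N\bigl(t,\,t(1-s(1-t))\bigr)}{(1-t)^m}.
\]

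For the specialization at $t=1$, I would invoke the dictionary between Hilbert series and intersection theory provided by Theorem~\ref{th:HS}, which identifies the codimension-$\ell$ part of the Taylor expansion of $N(1-T,1-U)$ at $(T,U)=(0,0)$ with the cycle class $[\X(\A)]\in A^\bullet$, under $h\leftrightarrow T$, $k\leftrightarrow U$. By Theorem~\ref{thm:intcycle}, $[\X(\A)]=\chihom(\A,-h,k-h)$. The substitution makes $U=T(1+st)$, hence $k\mapsto T(1+s)$ and $k-h\mapsto Ts$ at $T=0$, so the leading class evaluates to $\chihom(\A,-T,Ts)=(-T)^\ell\chi(\A,-s)$. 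In the essential case $\ell=m$, dividing by $T^m=(1-t)^m$ yields $\Psi_\A(s,1)=(-1)^m\chi(\A,-s)$. The non-essential case reduces to the essential one via the splitting $D_p(\A)=\bigoplus_q D_q(\A_0)\otimes\bigwedge^{p-q}\Der_\C(W)$ along the maximal flat $W$, which matches compensating $(1-t)^{-(m-\ell)}$ factors on both sides.

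The main obstacle will be the Laurent polynomial claim---that $(1-t)^m$ divides $N(t,t(1-s(1-t)))$ as a polynomial in $t,s$. It follows from the order-$\ell$ vanishing of $N(1-T,1-U)$ at the origin (by Theorem~\ref{th:HS}) together with the substitution $U=T(1+st)=T(1+s)-sT^2$ giving a nonzero linear term in $T$: the pullback vanishes to order $\ell$ in $T$, which suffices for the essential case $\ell=m$, and the non-essential case is handled by the reduction above.
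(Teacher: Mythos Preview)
Your proposal is correct and follows essentially the same route as the paper. The paper packages your steps into Corollary~\ref{cor:Dcplx} (the Euler-characteristic identity $h(S/I;t,u)=P_\A(t,-u)/(1-u)^n$ from the tame complex) and Corollary~\ref{cor:tutte} (the polynomiality and the specialization at $t=1$, derived from Theorem~\ref{th:HS} via the same substitution $u=t-st(1-t)$, $1-u=(1-t)(1+st)$), then observes in the final remark that combining the two yields Theorem~\ref{th:st}; your handling of the non-essential case via the splitting along $W$ is a small addition that the paper leaves implicit.
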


For convenience, we define
\begin{equation}\label{eq:defP}
P_\A(x,y)=\sum_{p=0}^m h(D_p(\A),x)y^p.
\end{equation}
so that then $P_\A(t,st(1-t)-t)=\Psi_\A(s,t)$.

\subsection{Reducibility}\label{reducibility}

Recall that $\A$ in $V$ is called reducible if there is a non-trivial decomposition $V=V_1\oplus V_2$ and $\A=\A_1\cup\A_2$ for arrangements $\A_1$ and $\A_2$ in $V_1$ and $V_2$ respectively. 
In this case, we write $\A=\A_1\oplus\A_2$.
Otherwise, $\A$ is said to be irreducible.  
Note that $\A$ has a bridge if and only if $\A$ is reducible and $\A=\A_1\oplus\A_2$ in such a way that $\abs{\A_1}=1$.

A decomposition $\A=\A_1\oplus\A_2$ induces a decomposition
\begin{equation}\label{eqn:Dsplit}
D(\A)=(D(\A_1)\otimes_{\C}R_2) \oplus (R_1\otimes_{\C} D(\A_2)),
\end{equation}
where $R_i=\C[V_i]$. 
In particular, if $H=\ker x_1$ is a bridge and $\A'$ is the deletion, then
\begin{equation}\label{eqn:bridgedecomp}
D(\A)=R x_1\partial_{x_1}\oplus D(\A').
\end{equation}

\subsection{Critical sets}\label{ss:critical}

The variety of critical points of $\A$ is, by definition,
\[
\Sigma(\A):=\set{(x,\lambda)\in M\times\C^\A\mid \omega_{\lambda}(x)=0}.
\]
Let $C=C(\A)=\C[\C^\A]=\C[a_H\mid H\in\A]$, write $a_i=a_{H_i}$, and set $S=R\otimes_{\C}C$. 

\begin{thm}[{\cite[Thm.~2.9]{CDFV09}}]\label{th:closure}
For any central arrangement $\A$, we have $\OSigma(\A)=V(I(\A))$, where
\[
I(\A):=(\langle\theta,\omega_\ba\rangle\mid\theta\in D(\A)\otimes_R S),
\]
and
$\langle\cdot,\cdot\rangle$ denotes the contraction of a $1$-form along a logarithmic vector field.
\end{thm}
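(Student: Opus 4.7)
My plan is to prove the two inclusions $\OSigma(\A)\subseteq V(I(\A))$ and $V(I(\A))\subseteq \OSigma(\A)$ separately; the second reduces to a saturation statement that I expect to be the main obstacle.

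\emph{Easy inclusion.} Each generator of $I(\A)$ is genuinely polynomial: for $\theta\in D(\A)$ the condition $\theta(f_i)\in(f_i)$ gives $\theta(f_i)/f_i\in R$, so $\langle\theta,\omega_\ba\rangle=\sum_i a_i\,\theta(f_i)/f_i\in S$. On $M\times\C^n$, the condition defining $D(\A)$ becomes vacuous once $f$ is inverted, so $D(\A)\otimes_R R_f=\Der_\C(R_f)$; equivalently, at any $x\in M$ every tangent vector of $V$ is realised as $\theta(x)$ for some $\theta\in D(\A)$. Consequently, for $(x,\lambda)\in M\times\C^n$, the vanishing of $\langle\theta,\omega_\lambda\rangle(x)$ for all $\theta$ is equivalent to $\omega_\lambda(x)=0$ in $T_x^*V$, i.e.\ to $(x,\lambda)\in\Sigma(\A)$. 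Thus $V(I(\A))\cap(M\times\C^n)=\Sigma(\A)$, and taking closures in $V\times\C^n$ yields $\OSigma(\A)\subseteq V(I(\A))$.

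\emph{Reverse inclusion and main obstacle.} This direction would follow at once provided $V(I(\A))$ has no irreducible component supported inside $V(f)\times\C^n$: every other component meets $M\times\C^n$ in a dense open subset, which by the previous step lies in $\Sigma(\A)$, forcing the whole component into $\OSigma(\A)$. The required assertion is that no minimal prime of $I(\A)$ contains $f$, i.e.\ $V(I(\A)\colon f^\infty)=V(I(\A))$. Establishing this saturation is the heart of the argument. I would view $I(\A)$ as the image of the $S$-linear contraction map $\varphi\colon D(\A)\otimes_R S\to S$, $\theta\mapsto\langle\theta,\omega_\ba\rangle$, and study the associated primes of $\coker\varphi$ via a matrix presentation whose entries lie in $R$. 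Near a generic point of a single hyperplane $H_i$, $D(\A)$ localises to a free $R$-module and a direct local computation shows that $f_i$ is a nonzerodivisor on the quotient, ruling out spurious components in codimension one of $V(f)$. The genuine difficulty, and what I expect to be the principal obstacle, is at deeper flats where $D(\A)$ need not be free; two natural strategies suggest themselves: an induction on the rank via deletion-restriction, using the standard short exact sequences relating $D(\A)$, $D(\A')$, and $D(\A'')$, or an intrinsic argument exploiting the reflexivity of $\Omega^1(\A)$ to control $\coker\varphi$ in codimension one. A combination of the two is likely to be needed.
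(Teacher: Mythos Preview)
The paper does not prove this theorem: it is stated with attribution to \cite[Thm.~2.9]{CDFV09} and no proof is given here. So there is no ``paper's own proof'' to compare your proposal against; the argument you are trying to reconstruct lives in the cited reference, not in this paper.

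As for your outline itself: the easy inclusion is correct and complete. For the reverse inclusion you have correctly isolated the crux---that $I(\A)$ has no embedded or isolated component supported on the divisor $V(f)\times\C^n$---but you have not actually proved it; you end with ``a combination of the two is likely to be needed,'' which is a plan, not a proof. Note also that the saturation statement you want is stronger than merely ruling out components in codimension one of $V(f)$: you need to exclude components along \emph{every} flat, and your sketch for the codimension-one case does not obviously propagate. If you want to complete this, one route that the authors of \cite{CDFV09} exploit (and which this paper uses repeatedly, e.g.\ in the proof of Theorem~\ref{th:delres}) is to work flat by flat, localizing near a generic point $p\in X^\circ$ of each flat $X$ and reducing to the essential, irreducible local arrangement $\A_X$ in a transversal slice; at that stage the Euler derivation and \cite[Prop.~2.8]{CDFV09} pin down $I(\A)$ locally. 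Your reflexivity idea for $\Omega^1(\A)$ controls only codimension-one behaviour and will not by itself handle the deeper strata.
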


Since $\A$ is central, $I(\A)$ is bihomogeneous in the variables of $R$ and $C$ respectively. 
By \cite[Cor.~3.8]{CDFV09}, $I(\A)$ is radical if the arrangement $\A$ is tame. 
The variety $\OSigma(\A)$ is irreducible and, in general, singular.

\begin{exa}\label{ex:3lines}
For the arrangement $\A$ defined by $xy(x-y)$ in $\C^2$, we may take
$\theta_1=x\partial_x+y\partial_y$ and 
$\theta_2=x^2\partial_x+y^2\partial_y$ as a basis for the module of
derivations.  Then
\[
I(\A)=(a_1+a_2+a_3,x(a_1+a_3)+y(a_2+a_3)).
\]
\end{exa}

\begin{exa}\label{ex:boolean}
If $\A$ is the Boolean arrangement, then $I(\A)=(a_1,\ldots,a_n)$. Note that this is the irrelevant ideal of $C$, so in this case
$\X(\A)$ is empty.
\end{exa}

\section{A deletion-restriction formula}\label{sec:drform}

In this section, we will assume that $\A$ is an essential arrangement, so $\ell=m$.
We fix a hyperplane $H\in\A$ relative to which we define the deletion $\A'$, the restriction $\A''$, and the multirestriction $\A^H$. 

In order to compare the varieties $\OSigma$ for arrangements $\A$, $\A'$, and $\A''$, we shall introduce a diagram
\begin{equation}\label{eq:maps}
\xymat{
H\times\C^{\A''} & H\times\C^{\A'}\ar@{->>}[l]_-\sigma\ar@{^(->}[r]^\rho & V\times\C^{\A'}\ar@{^(->}[r]^-\iota & V\times \C^\A,
}
\end{equation}
where $\rho$ and $\iota$ are closed immersions, and $\sigma$ is a linear projection.

First, let $\rho\colon H\into V$ be the natural inclusion. 
Then define the linear projection $\sigma\colon\C^{\A^H}\onto\C^{\A''}$ as the $\C$-linear extension of the canonical surjection $\sigma\colon\A^H\onto\A''$ on coordinates.
Similarly, define the linear inclusion $\iota\colon\C^{\A'}\into\C^{\A}$ as the $\C$-linear extension of the  canonical inclusion $\iota\colon\A'\into\A$ on coordinates.
The corresponding maps of coordinate rings $\sigma^*\colon C(\A'')\into C(\A^H)$ and $\iota^*\colon C(\A)\onto C(\A')$ are given by
\[
\sigma^*(a_{H''})=\sum_{\sigma(H')=H''}a_{H'},\quad\iota^*(a_{H'})=a_{\iota(H')},
\]
respectively. 
Finally, abusing notation, write $\rho$ for $\rho\times\id$, $\sigma$ for $\id\times\sigma$, and $\iota$ for $\id\times\iota$.
This completes the definition of the diagram \eqref{eq:maps}.

\begin{thm}\label{th:delres}
For any arrangement $\A$, if $H$ is a bridge, then
\begin{equation}\label{5}
\OSigma(\A)=\OSigma(\A)\cap V(a_H)=\iota(\OSigma(\A'))\supset\iota\rho\sigma^{-1}(\OSigma(\A''));
\end{equation}
otherwise,
\begin{equation}\label{2}
\OSigma(\A)\cap V(a_H)=\iota(\OSigma(\A'))\cup\iota\rho\sigma^{-1}(\OSigma(\A'')),
\end{equation}
and this is generically a transversal intersection of smooth varieties.
\end{thm}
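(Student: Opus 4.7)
The plan is to work throughout with the ideal-theoretic description $\OSigma(\A)=V(I(\A))$ from Theorem~\ref{th:closure}, where $I(\A)$ is generated by the contractions $\langle\theta,\omega_\ba\rangle$ for $\theta\in D(\A)\otimes_R S$. The crucial starting observation is that modulo $a_H$ we have $\omega_\ba\equiv\omega_{\ba'}$, so in the ring $S/(a_H)$ (identified with $R\otimes C(\A')$ via $\iota^*$) the ideal $I(\A)+(a_H)$ is generated by the contractions $\langle\theta,\omega_{\ba'}\rangle$ with $\theta\in D(\A)$.

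For the bridge case, I would fix coordinates so that $H=\ker x_1$ is the separator, and invoke the decomposition \eqref{eqn:bridgedecomp}. The Euler piece gives $\langle x_1\partial_{x_1},\omega_\ba\rangle=a_H$, since $x_1$ divides only $f_H$ among the defining forms. Hence $a_H\in I(\A)$, proving the first equality. For $\theta\in D(\A')$ the contraction $\langle\theta,\omega_\ba\rangle=\langle\theta,\omega_{\ba'}\rangle$ (the remaining $H$-term pairs trivially with $\theta$, which involves neither $x_1$ nor $\partial_{x_1}$), and these are precisely the pulled-back generators of $I(\A')$. This yields $I(\A)=(a_H)+\iota^*I(\A')\cdot S$ and therefore $\OSigma(\A)=\iota(\OSigma(\A'))$. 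The trailing containment $\iota\rho\sigma^{-1}(\OSigma(\A''))\subset\iota(\OSigma(\A'))$ is obtained from the non-bridge argument below, which nowhere uses the non-bridge hypothesis.

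For the non-bridge case, I would separate the two containments. The inclusion $\iota(\OSigma(\A'))\subset\OSigma(\A)\cap V(a_H)$ follows at once from $D(\A)\subset D(\A')$: every generator of $I(\A)$ modulo $a_H$ is among the generators of $I(\A')$. For the inclusion $\iota\rho\sigma^{-1}(\OSigma(\A''))\subset\OSigma(\A)\cap V(a_H)$, I would argue pointwise on the dense open locus where $y\in H\cap M(\A'')$. At such a point, any $\theta\in D(\A)$ satisfies $\theta(f_H)\in(f_H)$, hence $\theta(y)\in T_yH$. Meanwhile, the restriction of $\omega_\lambda$ along $\rho$ equals $\omega_{\sigma(\lambda')}$: hyperplanes of $\A'$ sharing the same trace on $H$ contribute to the same logarithmic pole, which is exactly the definition of $\sigma^*$. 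Thus if $\sigma(\lambda')$ is a critical weight for $\A''$ at $y$, then the pairing $\langle\theta(y),\omega_\lambda(y)\rangle$ factors through the vanishing restriction, and so vanishes. The statement extends by closure.

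The reverse containment and transversality form the main obstacle. Here I would argue by irreducibility and dimension. Both $\OSigma(\A')$ and $\OSigma(\A'')$ are irreducible of codimension equal to their respective ranks, so $\iota(\OSigma(\A'))$ and $\iota\rho\sigma^{-1}(\OSigma(\A''))$ are irreducible subvarieties of $V\times\C^\A$ of codimension $\ell+1$; in particular they are distinct, since only the second is contained in the hyperplane $f_H=0$. In the non-bridge case, $a_H$ does not vanish identically on the irreducible variety $\OSigma(\A)$ (this can be checked at a generic critical point for a generic weight vector), so $\OSigma(\A)\cap V(a_H)$ is pure of codimension $\ell+1$. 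Combined with the inclusions already proved, the two candidate components must exhaust $\OSigma(\A)\cap V(a_H)$. Smoothness on generic loci is inherited from the smoothness of $\OSigma(\A')$ over $\P M(\A')\times\P^{n-2}$ and of $\OSigma(\A'')$ over $\P M(\A'')\times\P^{|\A''|-1}$ established in \cite{CDFV09}; transversality at a generic point of intersection follows because the tangent direction normal to $H$ in $V$ lies in the first component's tangent space but not in the second's, and a dimension count then forces the tangent spaces to span the ambient tangent space to $V(a_H)$ modulo their intersection.
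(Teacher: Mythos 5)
Your bridge case and the containment $\OSigma(\A)\cap V(a_H)\supseteq\iota(\OSigma(\A'))\cup\iota\rho\sigma^{-1}(\OSigma(\A''))$ are fine and essentially agree with the paper. The genuine gap is the reverse inclusion in the non-bridge case. From the facts that $Z:=\OSigma(\A)\cap V(a_H)$ is pure of dimension $n-1$ (since $a_H$ does not vanish identically on the irreducible $n$-dimensional $\OSigma(\A)$) and that $Z$ contains the two irreducible $(n-1)$-dimensional varieties $W'=\iota\OSigma(\A')$ and $W''=\iota\rho\sigma^{-1}\OSigma(\A'')$, you conclude that these two ``must exhaust'' $Z$. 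That is a non sequitur: nothing so far rules out further $(n-1)$-dimensional components of $Z$, and here they are a real threat. Over the open part $X^\circ$ of any flat $X$ with $\A_X$ irreducible, $\OSigma(\A)$ is cut out locally (after the coordinate change of \cite[p.~13]{CDFV09}) by $a_{H_1}+\dots+a_{H_k}$ together with $d=\dim X$ further coordinates, so $\OSigma(\A)_{X^\circ}$ has dimension exactly $n-1$; one must check that adding the equation $a_H$ actually drops this dimension except when $X=H$. This stratum-by-stratum analysis — restricting to flats $X\subseteq H$, the local normal form for $D(\A)$ along $X^\circ$, discarding reducible $\A_X$ by a dimension count, and identifying the surviving stratum $X=H$ — is the core of the paper's proof of \eqref{2} and is missing from yours.

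The transversality assertion is also not established. What must be shown is that $\OSigma(\A)$ is generically smooth along $Z$ and meets $V(a_H)$ transversally there, in particular along the component $W''$, which lies over $H$, i.e.\ outside the locus $M\times\C^{\A}$ where smoothness of $\OSigma(\A)$ is known from \cite{CDFV09}. Your appeal to smoothness of $\OSigma(\A')$ and $\OSigma(\A'')$ concerns the components $W'$, $W''$ themselves, not $\OSigma(\A)$, and your tangent-direction remark addresses (only vaguely) transversality of $W'$ and $W''$ to each other, which is a different statement. The paper handles this by computing explicit local equations of $\OSigma(\A)$ on $V^\circ_H=V\setminus\bigcup(\A\setminus\{H\})$, where the non-bridge hypothesis enters crucially through a nonvanishing coefficient ($c_{1,j}\neq0$, hence $r\neq0$), yielding generic smoothness of $\OSigma(\A)$ along $Z_{H^\circ}$ and transversality with $V(a_H)$ there; away from $H$ the transversality comes from the vector-bundle structure of $\Sigma(\A)$ over $M$ (Lemma~\ref{10}). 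Your proposal contains no argument playing either role, so both the exhaustion of components and the transversality claim remain unproved.
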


In terms of the the diagram of rings
\[
\xymat{
R/(f_H)\otimes C(\A'')\ar[r]^-{\sigma^*}&R/(f_H)\otimes C/(a_H)&\ar[l]_-{\rho^*}R\otimes C/(a_H)&\ar[l]_-{\iota^*}R\otimes C=S
}
\]
associated with \eqref{eq:maps}, the varieties in Theorem~\ref{th:delres} are given explicitly by
\begin{equation}\label{eq:defid}
\iota(\OSigma(\A'))=V((\iota^*)^{-1}(I(\A'))),\quad\iota\rho\sigma^{-1}(\OSigma(\A''))=V((\iota^*\circ\rho^*)^{-1}(\langle D(\A''),\omega_{\ba}^H\rangle)
\end{equation}
where $\omega_{\ba}^H=\rho^*\omega_{\bf{a'}}$. 

We first settle two special cases of Theorem~\ref{th:delres}.

\begin{lem}
If $H$ is a bridge, then \eqref{5} holds.
\end{lem}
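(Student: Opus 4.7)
The plan is to use the bridge decomposition \eqref{eqn:bridgedecomp} to reduce the identity $I(\A)=(\iota^*)^{-1}(I(\A'))$ to a short contraction computation. First I would choose coordinates $x_1,\dots,x_m$ on $V$ so that $f_H=x_1$ and each $f_{H'}$ for $H'\in\A'$ depends only on $x_2,\dots,x_m$; this is possible exactly because $H$ is a bridge, so that $\A=\{H\}\oplus\A'$ with $\A'$ pulled back from the complementary hyperplane. Then \eqref{eqn:bridgedecomp} reads
\[
D(\A)=R\,x_1\partial_{x_1}\oplus D(\A'),
\]
and the logarithmic $1$-form splits as $\omega_\ba=a_H\,dx_1/x_1+\omega_{\ba'}$, with $\omega_{\ba'}$ independent of both $x_1$ and $a_H$.

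Next I would compute the two families of contractions. On the summand $Rx_1\partial_{x_1}$, one gets $\langle x_1\partial_{x_1},\omega_\ba\rangle=a_H$ since $\partial_{x_1}$ annihilates $\omega_{\ba'}$. On the summand $D(\A')$, any $\theta'$ satisfies $\theta'(x_1)=0$, so $\langle\theta',\omega_\ba\rangle=\langle\theta',\omega_{\ba'}\rangle$, a generator of $I(\A')$. Assembling these in $S$,
\[
I(\A)=(a_H)+I(\A')S=(\iota^*)^{-1}(I(\A')),
\]
where the last equality uses that $\iota^*$ is reduction modulo $a_H$. By \eqref{eq:defid} this proves $\OSigma(\A)=\iota(\OSigma(\A'))$, and the containment $a_H\in I(\A)$ yields $\OSigma(\A)=\OSigma(\A)\cap V(a_H)$.

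Finally, for the inclusion $\iota(\OSigma(\A'))\supset\iota\rho\sigma^{-1}(\OSigma(\A''))$, I would use that the bridge property forces the map $\A'\onto\A''$, $H'\mapsto H\cap H'$, to be a bijection (so $\sigma$ is an isomorphism), and that $\OSigma(\A')$ factors as $\C_{x_1}\times\OSigma(\A'')$ inside $V\times\C^{\A'}$; the subvariety $\iota\rho\sigma^{-1}(\OSigma(\A''))$ is then simply the slice $\{x_1=0\}$ of this product, with $a_H=0$ already built in, hence is contained in $\iota(\OSigma(\A'))$.

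I expect the only real obstacle to be notational bookkeeping along the diagram \eqref{eq:maps}: one must be careful about which ring each ideal lives in and verify the identification $(a_H)+I(\A')S=(\iota^*)^{-1}(I(\A'))$. Beyond that, all the content collapses to \eqref{eqn:bridgedecomp} together with the one-line evaluation $\langle x_1\partial_{x_1},dx_1/x_1\rangle=1$.
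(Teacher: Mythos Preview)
Your proposal is correct and follows exactly the approach of the paper: the paper's proof consists of the single claim that $I(\A)+(a_H)=(\iota^*)^{-1}(I(\A'))\subset(\iota^*\circ\rho^*)^{-1}(\langle D(\A''),\omega_{\ba}^{H}\rangle)$ ``follows immediately from \eqref{eqn:bridgedecomp},'' and you have simply unpacked this by computing the two families of contractions coming from the summands of \eqref{eqn:bridgedecomp}. Your treatment of the final inclusion via the bijection $\A'\to\A''$ is a bit more explicit than the paper's, but it is the same content.
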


\begin{proof}
By \eqref{eq:defid}, it suffices to verify that
\[
I(\A)+(a_H)=(\iota^*)^{-1}(I(\A'))\subset(\iota^*\circ\rho^*)^{-1}(\langle  D(\A''),\omega_{\ba}^{H}\rangle).
\] 
This follows immediately from \eqref{eqn:bridgedecomp}.
\end{proof}

\begin{lem}\label{10}
We have
\[
\OSigma(\A)\cap V(a_H)\cap D(f_H)=\iota\OSigma(\A')\cap D(f_H).
\]
Moreover, $\overline{\Sigma}(\A)$ and $V(a_H)$ have a generically smooth and transversal intersection if $H$ is not a bridge. 
\end{lem}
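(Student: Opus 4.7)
The plan is to reduce both assertions to an ideal-theoretic identity on the open set $D(f_H)$. The key observation is that localization at $f_H$ trivializes the logarithmic condition $\theta(f_H)\in(f_H)$, since it becomes automatic once $f_H$ is a unit; this yields the identification
\[
D(\A)\otimes_R R[f_H^{-1}]=D(\A')\otimes_R R[f_H^{-1}].
\]
This identification is the technical heart of the argument, and is also precisely where the restriction to $D(f_H)$ is essential.

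For the first equality, I combine this identification with the decomposition $\omega_\ba=\omega_{\ba'}+a_H\,df_H/f_H$, which is valid on $D(f_H)$. For any $\theta\in D(\A')$, contraction then gives
\[
\langle\theta,\omega_\ba\rangle=\langle\theta,\omega_{\ba'}\rangle+a_H\,\theta(f_H)/f_H,
\]
which reduces modulo $(a_H)$ to $\langle\theta,\omega_{\ba'}\rangle$. Hence the ideals $I(\A)+(a_H)$ and $I(\A')+(a_H)$ coincide in $S[f_H^{-1}]$, and Theorem~\ref{th:closure} together with the description \eqref{eq:defid} of $\iota\OSigma(\A')$ translates this into the desired scheme-theoretic equality on $D(f_H)$.

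For the second assertion, assume $H$ is not a bridge, so $\rank\A'=\ell$ and $\dim\iota\OSigma(\A')=m+n-1-\ell$, exactly one less than $\dim\OSigma(\A)$. The scheme equality just obtained shows that on $D(f_H)$ the intersection $\OSigma(\A)\cap V(a_H)$ coincides with the irreducible variety $\iota\OSigma(\A')$, of the expected codimension one in $\OSigma(\A)$. By the smoothness of $\X(\A')$ over $\P M'\times\P^{n-2}$ recalled in the introduction (together with the analogous statement for $\A$), both $\OSigma(\A)$ and $\iota\OSigma(\A')$ are smooth at every point $(x,\lambda)$ with $x\in M$, and such points are dense in $\iota\OSigma(\A')\cap D(f_H)$ since $M\subset M'\cap D(f_H)$. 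At any such point, smoothness of the scheme-theoretic intersection of the expected codimension one inside the smooth variety $\OSigma(\A)$ is equivalent to transversality of $\OSigma(\A)$ with $V(a_H)$, which completes the argument. I expect the main obstacle to be the careful bookkeeping that converts the ideal identity over $S[f_H^{-1}]$ into the scheme equality and then into the transversality statement, thereby bypassing a direct Jacobian computation for $\omega_\ba$.
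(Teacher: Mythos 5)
Your proof of the first equality is essentially the paper's own argument: localize at $f_H$, use $D(\A)_{f_H}=D(\A')_{f_H}$ together with $\omega_\ba\equiv\omega_{\ba'}\bmod (a_H)$, and translate the resulting ideal identity $(I(\A)+(a_H))_{f_H}=((\iota^*)^{-1}I(\A'))_{f_H}$ into the stated equality of varieties via \eqref{eq:defid}. The transversality step, however, has a genuine gap. You deduce transversality from the fact that, on $D(f_H)$, the intersection $\OSigma(\A)\cap V(a_H)$ coincides with the smooth (over $M$) irreducible variety $\iota\OSigma(\A')$ of the expected codimension one. But smoothness of the underlying set of an intersection in the expected dimension does not imply transversality: a parabola and its tangent line meet in a single smooth point of the expected dimension without meeting transversally. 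The implication you invoke requires the \emph{scheme-theoretic} intersection of the variety $\OSigma(\A)$ with $V(a_H)$, i.e.\ the scheme cut out by $\sqrt{I(\A)}+(a_H)$, to be generically smooth; your ideal identity only involves $I(\A)+(a_H)$ and $(\iota^*)^{-1}I(\A')$, and neither $I(\A)$ nor $I(\A')$ is known to be radical in general (the paper has radicality only for tame arrangements). The smoothness of $\X(\A')$ over $\P M'\times\P^{n-2}$ that you cite is a statement about the reduced varieties, so it does not supply the missing reducedness of the schemes $V(I(\A))$ and $V(I(\A'))$ along the intersection.

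The gap is fixable, but the fix is essentially the ingredient the paper uses instead: by \cite[Prop.~2.5]{CDFV09}, $\Sigma(\A)\to M$ and $\Sigma(\A')\to M$ are vector bundles, of ranks $n-\ell$ and $n-1-\ell$ respectively (the latter because $H$ is not a bridge, so $\rank\A'=\ell$). Combined with the first claim, this says that the hyperplane $a_H=0$ cuts each \emph{linear} fiber of $\Sigma(\A)$ over $M$ in codimension one, so the fiber is not contained in $\set{a_H=0}$; since the fiber lies in the tangent space of $\Sigma(\A)$ and $V(a_H)$ contains all horizontal directions, transversality at every point over $M$ follows at once, with no reducedness question to address. (Equivalently, one could repair your argument by checking that over $M$ the localized ideals are generated by a constant-rank system of forms linear in the $a$-variables and hence define $\Sigma(\A)$, $\Sigma(\A')$ as reduced smooth schemes, but that verification is exactly the vector-bundle statement.) Either way, some input beyond the set-theoretic equality and smoothness of the varieties is needed, and your write-up does not supply it.
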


\begin{proof}
We compute
\begin{align*}
(I(\A)+(a_H))_{f_H}
&=(a_H,\left<\theta,\omega_{\ba}\right>\mid \theta\in D(\A))_{f_H}\\
&=(a_H,\left<\theta,\omega_{\ba}\right>\mid \theta\in D(\A)_{f_H})\\
&=(a_H,\left<\theta,\omega_{\ba'}\right>\mid \theta\in D(\A)_{f_H})\\
&=(a_H,\left<\theta,\omega_{\ba'}\right>\mid \theta\in D(\A')_{f_H})\\
&=(a_H,\left<\theta,\omega_{\ba'}\right>\mid \theta\in D(\A'))_{f_H}\\
&=(\iota^*)^{-1}(I(\A'))_{f_H}.
\end{align*}
Using \eqref{eq:defid} this proves the first claim.

The last claim follows from \cite[Prop.~2.5]{CDFV09}, which states that the projection $M\times\C^\A\to M$ turns $\Sigma(\A)$ into a vector bundle over $M$ of rank $\abs{\A}-\ell$. 
Since $H$ is not a bridge, $\rank(\A')=\ell$ as well. 
By the same result, $\Sigma(\A')$ is then a vector bundle over $M$ of rank equal to  $\abs{A'}-\ell=n-1-\ell$.
Thus, by the first claim, $V(a_H)$ must intersect each fiber of $\Sigma(\A)$ over $M$ transversally.
The second claim follows.
\end{proof}

\begin{proof}[Proof of Theorem~\ref{th:delres}]
We shall assume that $H=H_1$.
For brevity, denote
\[
W=\OSigma(\A),\ W'=\iota\OSigma(\A'),\ W''=\iota\rho\sigma^{-1}\OSigma(\A''),\ K=V(a_1),\ Z=W\cap K.
\]
For any subset $X\subset V$, we denote by a lower index $X$ the intersection with $X\times\C^n$, and by $V_X(-)$ the zero set in $X\times\C^n$ of a collection of equations.
Using the inclusion $D(\A)\subset D(\A')$ and restriction $D(\A)\to D(\A'')$ one readily verifies that $Z\supseteq W'\cup W''$.
By Lemma~\ref{10}, $W_{V\setminus H}\cap K_{V\setminus H}=W'_{V\setminus H}$ intersects as claimed.

Let $X^\circ=X\setminus\bigcup(\A\setminus\A_X)$ for any flat $X$. 
To prove \eqref{2}, then, it is enough to show for each $X$ that
\begin{equation}\label{4}
Z_{X^\circ}=W_{X^\circ}\cap K_{X^\circ}\subseteq W'_{X^\circ}\cup W''_{X^\circ}.
\end{equation}
The previous paragraph shows this holds for $X\not\subseteq H$, so we assume $X\subseteq H$.
By \cite[Prop.~2.4]{CDFV09}, $W$, $W'$, $W''$ are irreducible of dimensions $n$, $n-1$, $n-1$ respectively.
So the irreducible components of $W\cap K$ have dimension $n-1$ or $n$.
Since $W\cap K$ is decomposed into finitely many constructible sets $W_{X^\circ}\cap K_{X^\circ}$, it is then not necessary to prove \eqref{4} in case $\dim W_{X^\circ}\cap K_{X^\circ}<n-1$, and in particular not in case $\dim W_{X^\circ}<n-1$.

Let $d=\dim X$.
We may assume that $\A_X=\{H_1,\dots,H_k\}$ and, since $\A$ is essential, that
\begin{equation}\label{1}
X\cap H_{k+1}\cap\dots\cap H_{k+d}=\{0\}.
\end{equation}
In particular, any $f_j$ is a linear combination of $f_1,\dots,f_{k+d}$.
Applying the localization technique in \cite[p.~13]{CDFV09}, we may then replace $\omega_\ba$ by $\omega_{\ba'}$ where $\ba'=(a_1,\dots,a_{k+d},0,\dots,0)$, by a suitable coordinate change.
Note that coordinates $a_i$, $i=1,\dots,k$, are indeed unchanged over ${X^\circ}$:
They are changed only by a multiple of $f_i$ which is zero on $X$.
Using \eqref{1}, we may then choose a coordinate system on $V$ such that $x_{\ell-d+i}=f_{k+i}$, $i=1,\dots,d$, and $f_j=f_j(x_1,\dots,x_{\ell-d})$, $j=1,\dots,k$.
Set $T=V(x_{\ell-d+1},\dots,x_\ell)$ such that $p+T$ is transversal to $X$ at $p\in{X^\circ}$. 
Then, locally at $p$, $D(\A)$ is generated by $\partial_{\ell-d+1},\dots,\partial_\ell$ and $D(\A_X^{p+T})$, where $\A_X^{p+T}=\{H'\cap(p+T)\mid H'\in\A_X\}$.
Using \cite[Prop.~2.8]{CDFV09}, it follows that
\begin{equation}\label{3}
W_{X^\circ}=V_{X^\circ}(a_1+\dots+a_k,a_{k+1},\dots,a_{k+d})
\end{equation}
if $\A_X$ is irreducible.

In general, the sum could split into several such sums according to an irreducible decomposition of $\A_X$. 
But this case is irrelevant because then $\dim W_{X^\circ}<n-1$.
Intersecting $W_{X^\circ}$ with $K_{X^\circ}$ means adding $a_1$ to \eqref{3}.
Then the dimension drops to $n-2$ unless $k=1$, so we can assume $k=1$ which means $X=H$ and hence $d=\ell-1$.
But then $W_{X^\circ}=W'_{X^\circ}=W''_{X^\circ}=V(a_1,\dots,a_\ell)$ and \eqref{4} holds trivially.

To prove the statement on generic transversality, we are reduced to the case $X=H$ as before, but we have to find the equations of $W_{V^\circ_H}$ where $V^\circ_H=V\setminus\bigcup(\A\setminus\{H\})$.
To this end, denote by $\ba$ and $\ba'$ the coordinates before and after the coordinate change from \cite[p.~13]{CDFV09} applied above.
With the above choice of coordinates and $x_1=f_1$, 
\[
a'_i=a_i+x_i\sum_{j>\ell}c_{ij}\frac{a_j}{f_j},\quad i\le\ell,\quad
a'_j=a_j,\quad j>\ell.
\]
Since $D(\A)$ is generated by $x_1\partial_1,\partial_2,\dots,\partial_\ell$ on $V^\circ_H$, it follows that
\[
W_{V^\circ_H}=V_{V^\circ_H}\left(a_1+x_1r,\frac{a'_2}{x_2},\dots,\frac{a'_\ell}{x_\ell}\right)=V_{V^\circ_H}(a_1+x_1r,a'_2,\dots,a'_\ell)
\]
where
\[
r=\sum_{j>\ell}c_{1,j}\frac{a_j}{f_j}=\sum_{j>\ell}c_{1,j}\frac{a'_j}{f_j}.
\]
If $H$ is not a bridge, $c_{1,j}\ne0$ for some $j$ and hence $r\ne0$ on
\[
V_{V^\circ_H}(x_1,a'_1,\dots,a'_\ell)=V_{H^\circ}(a_1,a'_2,\dots,a'_\ell)=Z_{H^\circ}.
\]
Thus, generically along $Z_{H^\circ}$, $W_{V^\circ_H}$ is smooth and intersects $K_{V^\circ_H}=V_{V^\circ_H}(a_1)$ transversally.
\end{proof}

\section{An intersection ring formula}

In this section, we prove Theorem~\ref{thm:intcycle} using a deletion-restriction argument based on Theorem~\ref{th:delres}.
We begin with two terms in the equality of Theorem~\ref{thm:intcycle} that need to be verified separately.  

\begin{lem}\label{lem:monic}
Then the coefficient of $h^{\ell}$ in $[\X(\A)]$ is zero. 
Moreover, if $\A$ is not Boolean, then the coefficient of $k^{\ell}$ equals $1$. 
\end{lem}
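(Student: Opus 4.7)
My plan splits into the two claims. The first is structural, while the second requires an intersection-theoretic calculation.

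For the coefficient of $h^\ell$: since $h^\ell = 0$ in $A^\bullet$, this monomial is not a basis element of $A^\ell$. When $[\X(\A)]$ is expressed as a $\Z$-linear combination of the basis $\{h^i k^{\ell-i} : 0 \le i \le \ell-1\}$ of $A^\ell$ (which is exhaustive since $\ell \le n$), the coefficient of $h^\ell$ is by convention zero. I would simply remark on this, noting also that it is consistent with \eqref{eq:Xfac} in the free case, where the $h^\ell$ term in the symbolic product $\prod_{i=1}^\ell(d_i h + k)$ has coefficient $d_1\cdots d_\ell$ but vanishes in $A^\bullet$.

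For the coefficient of $k^\ell$: first I would observe that $\A$ non-Boolean forces $\ell < n$, since an essential arrangement with $n = \ell$ has $\ell$ linearly independent defining forms and is therefore Boolean. Writing $[\X(\A)] = \sum_{i=0}^{\ell-1} c_i h^i k^{\ell-i}$, the leading coefficient $c_0$ is obtained by pairing with the Poincar\'e-dual basis element:
\[
c_0 = \deg\bigl([\X(\A)] \cdot h^{\ell-1} k^{n-1-\ell}\bigr).
\]
Geometrically, this is the intersection number of $\X(\A)$ with $\{[p]\} \times L$ for a generic point $[p] \in \P V$ and a generic linear subspace $L \cong \P^\ell \subset \P^{n-1}$.

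Next I would identify the fiber of the projection $\pi\colon \X(\A) \to \P V$ over a generic $[p] \in \P M$. By \cite[Prop.~2.5]{CDFV09}, $\Sigma(\A)$ is a vector bundle of rank $n-\ell$ over $M$, so $\pi^{-1}([p])$ is the projectivization of the $(n-\ell)$-dimensional linear kernel of the evaluation $\lambda \mapsto \omega_\lambda(p)$ from $\C^n$ to $V^*$. Hence $\pi^{-1}([p])$ is a linear projective subspace $\P^{n-\ell-1} \subset \{[p]\} \times \P^{n-1}$, and intersecting it with a generic $L \cong \P^\ell$ of complementary dimension in $\P^{n-1}$ yields a single reduced point, giving $c_0 = 1$. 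The main obstacle is justifying that this intersection is transverse and of multiplicity one; this reduces to a standard genericity argument, and the linearity of $\pi^{-1}([p])$ makes the required Kleiman-type transversality step immediate.
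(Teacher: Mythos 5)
Your handling of the second claim is fine and is essentially the paper's own argument: both rest on $\Sigma(\A)\to M$ being a vector bundle of rank $n-\ell$ (\cite[Prop.~2.5]{CDFV09}, \cite[Prop.~4.1]{OT95}), so that the fiber of $\X(\A)$ over a generic point of $\P V$ is a linear $\P^{n-\ell-1}$, and pairing with a linear space of complementary dimension (the paper phrases this as $h^{\ell-1}\cdot[\X(\A)]=h^{\ell-1}k^{\ell}$) yields the coefficient $1$; your observation that non-Boolean forces $n>\ell$ is also the paper's.

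The genuine gap is in the first claim. You dismiss it as a notational convention because $h^{\ell}=0$ in $A^\bullet$; but that identity holds only when $\A$ is essential ($m=\ell$), and the entire purpose of this part of the lemma is its application in Remark~\ref{rem:essential} to a possibly \emph{non-essential} arrangement, where the ambient Chow ring is $\Z[h,k]/(h^{m},k^{n})$ with $m>\ell$ and $h^{\ell}$ is a genuine basis element of $A^{\ell}$. In that setting the statement has real content: the kernel of $((\pi^*)^{-1})^{\ell}$ in Lemma~\ref{lem:ratpullback} is generated by $h^{\ell}$, and one needs to know that $[\X(\A)]$ has no $h^{\ell}$-component in order to recover $[\X(\A)]$ from $[\X(\A^e)]$ and thereby reduce Theorem~\ref{thm:intcycle} to the essential case; with your reading this reduction is unsupported. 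The paper supplies the missing geometric input: by \cite[Prop.~2.6]{CDFV09} (in effect, the Euler derivation contracts $\oma$ to $\sum_{H}a_H$, which therefore lies in $I(\A)$), one has $\X(\A)\subseteq\P V\times\P K$ with $K=V(\sum_H a_H)$, hence $k^{n-1}\cdot[\X(\A)]=[\X(\A)\cap(\P V\times\{\lambda\})]=0$ for generic $\lambda$. Since multiplication by $k^{n-1}$ annihilates every monomial $h^{i}k^{\ell-i}$ with $i<\ell$ but sends $h^{\ell}$ to $h^{\ell}k^{n-1}\neq0$ when $m>\ell$, this forces the $h^{\ell}$-coefficient to vanish. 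Some such argument is required, and nothing in your proposal produces it.
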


\begin{proof}
First, $\X(\A)$ is contained in $\P V\times\P K$, where $K$ is the hyperplane in $\C^n$ given by $\sum_{i=1}^n a_i=0$, by \cite[Prop.~2.6]{CDFV09}. 
Therefore $k^{n-1}\cdot[\X(\A)]=[\X(\A)\cap\P V\times\set{\lambda}]=0$, for any
$\lambda\not\in\P\C^\A-\P K$, which gives the first claim.

Next, by \cite[Prop.~4.1]{OT95}, $\Sigma(\A)$ is a vector bundle of rank $n-\ell$ over the complement $M\subseteq V$. 
The torus $\C^*\times\C^*$ acts compatibly (see \cite[Prop.~2.5]{CDFV09}). 
Since $\A$ is not Boolean, we have $n>\ell$. 
For any $x\in M$, $\X(\A)\cap(\set{x}\times\P^{n-1})$ is then rationally equivalent to $\set{x}\times\P^{n-\ell-1}$.
That is, $h^{\ell-1}\cdot [\X(\A)]=h^{\ell-1}k^{\ell}$. 
The second claim follows.
\end{proof}

We continue to assume that $\A$ is essential.
Both to justify this hypothesis and for the following proof for essential $\A$, we will need the pullback of cycles along a rational map coming from a linear projection.
Due to the lack of an obvious reference, we give the construction.

Let $\pi\colon V'\onto V''$ be a linear projection of $\C$-vector spaces, $V$ another $\C$-vector space, and set $d=\dim V''$.
Let $Y=\P V\times \P(\ker \pi)$, and $U=X\setminus Y$.
\[
\xymat{
Y\ar@{^(->}[r]^-\alpha & X=\P V\times\P V'\ar@{-->}[r]^-\pi & \P V\times\P V''=Z\\
&U\ar@{^(->}[u]^-\beta\ar[ru]_-{\pi_U}
}
\]
where $\pi=\id\times\pi$ is a rational map with domain $U$.
\begin{lem}\label{lem:ratpullback}
Using the notation above, let
\begin{equation}\label{eq:ratpullbackinv}
(\pi^*)^{-1}=(\pi_U^*)^{-1}\circ\beta^*.
\end{equation}
Then the following sequence is exact:
\begin{equation}\label{eq:rightexact}
\xymat{
A^{\bullet-d}(Y)\ar[r]^-{\alpha_*} & A^\bullet(X)\ar[r]^-{(\pi^*)^{-1}} & A^\bullet(Z)\ar[r] & 0.
}
\end{equation}
Restricted to codimension $p<d$, $(\pi^*)^{-1}$ is an isomorphism with inverse
\begin{equation}\label{eq:ratpullback}
(\pi^*)^p=((\beta^*)^p)^{-1}\circ(\pi_U^*)^p\colon A^p(Z)\to A^p(X).
\end{equation}
These maps constitute an additive map that we shall denote by $\pi^*$.
\end{lem}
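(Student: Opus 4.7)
The plan is to realize both $(\pi^*)^{-1}$ and $\pi^*$ as shadows of one affine-bundle structure and then combine that with excision.

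First I would verify that $\pi_U\colon U\to Z$ is an affine bundle of relative dimension $k:=\dim V'-d$. Fixing any linear splitting $V'=\ker\pi\oplus V''$, the standard affine charts of $\P V''$ produce explicit Zariski-local trivializations of $\pi_U$ with fiber $\C^k$; equivalently, resolving the rational map $\pi$ by blowing up $Y\subset X$ yields a $\P^k$-bundle over $Z$, and $U$ is identified with the complement of the exceptional divisor inside that bundle, so that $\pi_U$ inherits the structure of a Zariski-locally trivial $\C^k$-bundle. Either description lets the homotopy-invariance theorem \cite[Thm.~3.3]{FultonBook} apply, giving an isomorphism $\pi_U^*\colon A^p(Z)\to A^p(U)$ preserving codimension.

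Second, I would invoke the excision sequence \cite[Prop.~1.8]{FultonBook} for the complementary pair $(Y,U)$ in $X$ and rewrite it in codimensional grading. Since $\codim_X Y=\codim_{\P V'}\P(\ker\pi)=\dim V''=d$, this reads
\[
A^{\bullet-d}(Y)\xrightarrow{\alpha_*}A^\bullet(X)\xrightarrow{\beta^*}A^\bullet(U)\to 0.
\]
Composing $\beta^*$ with $(\pi_U^*)^{-1}$ yields precisely the map $(\pi^*)^{-1}$ of \eqref{eq:ratpullbackinv}, so the exactness of \eqref{eq:rightexact} is immediate.

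For the refinement in codimension $p<d$, I would note that $A^{p-d}(Y)=0$ (negative codimension), hence $\alpha_*$ is zero in that range and $\beta^*\colon A^p(X)\to A^p(U)$ is itself an isomorphism; chaining its inverse with $\pi_U^*$ produces the stated two-sided inverse $(\pi^*)^p=((\beta^*)^p)^{-1}\circ(\pi_U^*)^p$ of \eqref{eq:ratpullback}, and additivity in $p$ is built into the construction. The only substantive step is the first one: confirming that $\pi_U$ is an honest Zariski-locally trivial affine bundle, since homotopy invariance fails for arbitrary smooth surjections. Everything else is a formal composition of two standard exact sequences.
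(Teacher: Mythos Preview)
Your proposal is correct and follows essentially the same route as the paper: both arguments invoke \cite[Thm.~3.3]{FultonBook} for the affine-bundle pullback $\pi_U^*$ and \cite[Prop.~1.8]{FultonBook} for the excision sequence of $(Y,U)$ in $X$, then combine them. You are in fact slightly more careful than the paper, which asserts that $\pi_U$ is a ``vector bundle'' without further comment; your remark that one must check Zariski-local triviality of the affine bundle (e.g.\ via a linear splitting $V'=\ker\pi\oplus V''$) is the honest justification for applying \cite[Thm.~3.3]{FultonBook}.
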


\begin{proof}
Since $\pi_U$ is a vector bundle, the flat pullback $\pi_U^*$ is an isomorphism (see \cite[Thm.~3.3]{FultonBook}).
Set $Y=X\setminus U$ and note that $\codim Y=d$ by hypothesis on $\pi$.
Then the flat pullback $\beta^*$ is surjective (by \cite[Prop.~1.8]{FultonBook}) and
\[
(\beta^*)^p\colon A^p(X)\onto A^p(U)
\]
is an isomorphism for $p<\codim Y$.  The claim follows.
\end{proof}

\begin{rmk}\label{rem:essential}
Assume that $\A$ is not essential.
Let $\A^e$ denote the essential arrangement obtained as the image of $\A$ under the linear projection $\pi\colon V\onto V/W$, where we recall $W=\bigcap_{H\in\A}H$. 
Applying Lemma~\ref{lem:ratpullback} to the corresponding rational map
\[
\xymat{
\pi=\pi\times\id\colon\P V\times\P\C^{\A}\ar@{-->}[r] & \P(V/W)\times\P\C^{\A^e},
}
\]
we obtain $(\pi^*)^{-1}[\X(\A)]=[\X(\A^e)]$ where $(\pi^*)^{-1}$ is defined by \eqref{eq:ratpullbackinv}. 
Since $[\X(\A)]\in A^\ell$ where $\ell=\codim W$, the map $((\pi^*)^{-1})^\ell$ is not an isomorphism.  
However, by \eqref{eq:rightexact}, its kernel is generated by $\alpha_*[W]=h^\ell$. 
But by Lemma~\ref{lem:monic}, the coefficient of $h^\ell$ in $[\X(\A)]$ is zero, so it suffices to prove Theorem~\ref{thm:intcycle} for essential arrangements.
\end{rmk}

We begin with the base case of an induction argument.

\begin{rmk}\label{rem:boolean}
Let $\A$ be the Boolean arrangement. 
Then both sides of the formula of Theorem \ref{thm:intcycle} are zero: $\X(\A)$ is the empty variety, defined by the irrelevant ideal in the second factor (Example \ref{ex:boolean}). 
On the other hand, $\chihom(\A,-h,k-h)$ equals $k^\ell$, which is zero in the Chow ring $A^\bullet$. 
\end{rmk}

For the induction step, we return to the setup of \S\ref{sec:drform}.
By further abuse of notation, we let $\sigma$, $\rho$, and $\iota$ denote the projectivization of the maps of \eqref{eq:maps}:
\begin{equation}\label{eq:projmaps}
\xymat{
\P H\times\P\C^{\A''}& \P H\times\P\C^{\A'}\ar@{-->}[l]_-\sigma\ar@{^(->}[r]^-\rho & \P V\times\P\C^{\A'}\ar@{^(->}[r]^-\iota & \P V\times\P\C^{\A}.
}
\end{equation}
Since $[\X(\A'')]\in A^{\ell-1}(\P H\times\P\C^{\A''})$, $\sigma^*[\X(\A'')]$ is defined by \eqref{eq:ratpullback} and, by definition,
\begin{equation}\label{eq:qpullback}
\sigma^*[\X(\A'')]=[\sigma^{-1}\X(\A'')].
\end{equation}

The geometric formula of Theorem~\ref{th:delres} now leads to the following in $A^{\bullet}$:

\begin{prp}\label{prop:inductclass}
If $H$ is a bridge, then 
\begin{equation}\label{eq:bridgecase}
[\X(\A)]=k\cdot[\X(\A')];
\end{equation}
otherwise, 
\begin{equation}\label{eq:recurrence}
k\cdot[\X(\A)]=k\cdot[\X(\A')]+
\begin{cases}
hk^\ell & \text{if $\A''$ is Boolean,}\\
hk\cdot\sigma^*[\X(\A'')] & \text{otherwise.}
\end{cases}
\end{equation}
\end{prp}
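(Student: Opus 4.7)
The plan is to translate the geometric decomposition of Theorem~\ref{th:delres} into a Chow-ring identity by tracing cycle classes through the projectivized diagram~\eqref{eq:projmaps}. The guiding observation is that $\iota$ and $\rho$ are hyperplane embeddings (in the second and first factors respectively), so their pushforwards act via the projection formula as multiplication by the hyperplane classes $k$ and $h$; concretely, $\iota_*(1)=k$ with $\iota^*k=k'$, and similarly for $\rho$. The rational pullback $\sigma^*$ of Lemma~\ref{lem:ratpullback} handles the second-factor projection in non-critical codimensions, while the Boolean case will be treated directly.

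For the bridge case, Theorem~\ref{th:delres} gives $\OSigma(\A)=\iota(\OSigma(\A'))$ and hence $\X(\A)=\iota(\X(\A'))$, so $[\X(\A)]=\iota_*[\X(\A')]=k\cdot[\X(\A')]$, which is \eqref{eq:bridgecase}. For the non-bridge case, Theorem~\ref{th:delres} supplies the generically transversal decomposition
\[
\X(\A)\cap V(a_H)=\iota\X(\A')\cup\iota\rho\sigma^{-1}\X(\A''),
\]
a proper intersection since both components have the expected codimension $\ell+1$. Generic transversality then forces intersection multiplicity one along each component, so
\[
k\cdot[\X(\A)]=[\iota\X(\A')]+[\iota\rho\sigma^{-1}\X(\A'')]
\]
holds in $A^\bullet$. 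The first summand equals $k\cdot[\X(\A')]$ just as before. For the second, composing $\iota_*\rho_*$ contributes a factor of $hk$, yielding $hk\cdot\sigma^*[\X(\A'')]$ whenever $\sigma^*[\X(\A'')]$ is unambiguously defined. Since $[\X(\A'')]$ sits in codimension $\rank(\A'')=\ell-1$, and $|\A''|\geq\ell-1$ always with equality precisely when $\A''$ is Boolean, Lemma~\ref{lem:ratpullback} supplies $\sigma^*$ as an isomorphism in the non-Boolean case, and the second line of \eqref{eq:recurrence} follows.

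The main obstacle is the Boolean subcase, where $\X(\A'')$ is empty by Example~\ref{ex:boolean} and $\sigma^*[\X(\A'')]$ sits in the critical codimension of Lemma~\ref{lem:ratpullback}, so $\sigma^*$ is only surjective rather than an isomorphism. I therefore compute $[\iota\rho\sigma^{-1}\X(\A'')]$ directly from the geometry of Theorem~\ref{th:delres}: since $\OSigma(\A'')=H\times\{0\}$, the preimage $\iota\rho\sigma^{-1}(\OSigma(\A''))$ is the linear subvariety $H\times\ker\sigma\times\{0\}$ of $V\times\C^{\A}$, whose projectivization $\P H\times\P\ker\sigma$ sits in $\P V\times\P\C^{\A}$ with class $h\cdot k^\ell$. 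The factor $h$ comes from $\P H\subset\P V$, and $\ker\sigma\times\{0\}$ has codimension $|\A''|+1=\ell$ in $\C^{\A}$, giving $k^\ell$. This completes the Boolean case of \eqref{eq:recurrence}.
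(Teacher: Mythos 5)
Your proof is correct and follows essentially the same route as the paper: the bridge case via pushforward along $\iota$ and the projection formula, and the non-bridge case via the decomposition of Theorem~\ref{th:delres} with generic transversality giving multiplicity one, pushforward along $\iota\rho$ contributing $hk$, and $\sigma^*$ handling the non-Boolean restriction. Your explicit computation of the class $hk^\ell$ in the Boolean case and your codimension check $\abs{\A''}\geq\ell$ justifying $\sigma^*$ merely spell out details the paper leaves implicit (via Remark~\ref{rem:boolean} and \eqref{eq:qpullback}).
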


\begin{proof}
If $H$ is a bridge, then $[\X(\A)]=[i\X(\A')]$ by \eqref{5}. 
Since $\iota$ is a linear inclusion, it is proper and has degree $1$, so $[\iota\X(\A)]=\iota_*[\X(\A')]$.
By the projection formula, $\iota_*(x\cdot\iota^*(k))=k\cdot\iota_*(x)$ for
$x\in A^\bullet(\P\C^{\A'})$ and, since $k=[K]$ and $K=\iota(\P\C^{\A'})$, $\iota^*(h)=1$ by definition. 
This proves \eqref{eq:bridgecase}.

If $H$ is not a bridge, by \eqref{2}, we have
\[
k\cdot[\X(\A)]=[\iota\X(\A')]+[\iota\rho\sigma^{-1}\X(\A'')],
\]
using \cite[Rem.~8.2]{FultonBook}.
If $\A''$ is Boolean, then $\iota\rho\sigma^{-1}(\OSigma(\A''))$ is a product of $H$ with a codimension-$\ell$ linear subspace of $\C^\A$ (Remark~\ref{rem:boolean}). 
Otherwise, $\sigma^{-1}\X(\A'')$ is nonempty, and the proof of \eqref{eq:recurrence} uses \eqref{eq:qpullback} and the same arguments as in the bridge case.
\end{proof}

We are ready to prove our main result.

\begin{proof}[Proof of Theorem~\ref{thm:intcycle}]
We argue by induction on $n=\abs{\A}$, the base case being trivial by Remark~\ref{rem:boolean}. 
For indeterminates $s$ and $t$, the recurrence \eqref{eq:delres} becomes
\begin{equation}\label{eq:homorecurrence}
\chihom(\A,-s,t-s)=\begin{cases}
t\chihom(\A',-s,t-s)&\text{if $H$ is a bridge,}\\
\chihom(\A',-s,t-s)+s\chihom(\A'',-s,t-s)&\text{otherwise.}
\end{cases}
\end{equation}
If $H$ is a bridge, then $[\X(\A)]=k\cdot \chihom(\A',-h,k-h)$, by induction together with Proposition~\ref{prop:inductclass} and Remark~\ref{rem:essential}.

If $H$ is not a bridge,
\[
k\cdot [\X(\A)]=k\cdot \chihom(\A',-h,k-h)+hk\cdot\chihom(\A'',-h,k-h),
\]
by induction and Proposition~\ref{prop:inductclass}.
Both sides of the expression have degree $\ell+1$.  We may assume 
$\A$ is not Boolean (Remark~\ref{rem:boolean}), in which case $n\geq
\ell+1$.  If $n>\ell+1$, we can conclude that 
\[
[\X(\A)]= \chihom(\A',-h,k-h)+h\cdot\chihom(\A'',-h,k-h).
\]
If $n=\ell+1$, then $k^{\ell+1}=0$ in $A^\bullet$.  In this case,
the coefficient of $k^\ell$ on the left is $1$ by Lemma~\ref{lem:monic}, and the same on the right since $T_\A(x,0)$ is monic, using \eqref{eq:Thomog}.
\end{proof}
\begin{rmk}
Let $L$ denote a line in $\P\C^\A$.  Then by Theorem~\ref{thm:intcycle},
\begin{eqnarray*}
[\X(\A)]\cdot[L] &=& \chi^+(-h,k-h)k^{n-2}\\
&=& t_{10}h^{\ell-1}k^{n-1}.
\end{eqnarray*}
The coefficient $t_{10}$ equals $\beta(\A)=\abs{\chi(\P M)}$, the well-known beta invariant of $\A$ (see, e.g., \cite[Prop.~6.2.12]{BrOx92}.)

This is to say, by Bezout's Theorem, that for generic choices of
$\lambda\in\C^n$ for which $\sum_{i=1}^n\lambda_i=0$, the 
critical set of the master function $\Phi_\lambda$ in $V$ equals
$\beta(\A)$ points.  The main result of \cite{OT95} is a refined
version of this statement: they show, additionally, that the
critical points are isolated and nondegenerate.  This calculation is also closely related to \cite[Thm.~5(3)]{CHKS06}, where the authors
count critical points (with multiplicities) for global normal
crossings divisors.
\end{rmk}

\section{Application to Hilbert series}

\subsection{Chern classes}

Recall that one may define Chern classes for any coherent sheaf $\F$ on a nonsingular variety $Y$: in this case, $\F$ has a finite resolution by vector bundles, and the Chern classes of $\F$ are defined formally using the resolution via the Whitney sum formula.  If the support of $\F$ has codimension $\ell$, then $c_p(\F)=0$ for $0\leq p<\ell$, by \cite[Ex.~15.3.6]{FultonBook}.
If $Z$ is a codimension-$\ell$ subvariety of $Y$, we have moreover that $c_\ell(\O_Z)=(-1)^{\ell-1}(\ell-1)![Z]$ in $A(Y)$, by \cite[Ex.~15.3.6]{FultonBook}.

Applying this to our situation gives the following, using Theorem~\ref{thm:intcycle}.

\begin{prp}\label{prop:chern_l}
For any arrangement $\A$, we have $c_p(\O_{\X(\A)})=0$ for $0\leq p<\ell$ and
\[
c_\ell(\O_{\X(\A)})=-(\ell-1)!\chihom(\A,h,h-k).
\]
\end{prp}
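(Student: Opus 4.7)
The plan is to reduce the proposition to a direct application of \cite[Ex.~15.3.6]{FultonBook}, which the paper already cited, combined with Theorem~\ref{thm:intcycle} and a sign manipulation using the homogeneity of $\chihom$.

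First I would set $Y=\P V\times\P^{n-1}$, which is nonsingular, so Chern classes of coherent $\O_Y$-modules are well-defined. The paper has recorded in \S\ref{sec:background} that $\X(\A)$ is an irreducible subvariety of $Y$ of codimension $\ell=\rank\A$, so $\O_{\X(\A)}$ is a coherent $\O_Y$-module whose support has codimension $\ell$. The cited result \cite[Ex.~15.3.6]{FultonBook} then immediately yields the vanishing $c_p(\O_{\X(\A)})=0$ for $0\le p<\ell$, and the identity
\[
c_\ell(\O_{\X(\A)})=(-1)^{\ell-1}(\ell-1)!\,[\X(\A)]
\]
in $A^\ell(Y)$.

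Next I would invoke Theorem~\ref{thm:intcycle} to substitute $[\X(\A)]=\chihom(\A,-h,k-h)$, producing
\[
c_\ell(\O_{\X(\A)})=(-1)^{\ell-1}(\ell-1)!\,\chihom(\A,-h,k-h).
\]
Finally, since $\chihom(\A,s,t)=s^\ell\chi(\A,t/s)$ is homogeneous of degree $\ell$ in $(s,t)$, we have $\chihom(\A,-h,k-h)=(-1)^\ell\chihom(\A,h,h-k)$. Combining the two signs gives $(-1)^{\ell-1}\cdot(-1)^\ell=-1$, yielding exactly $c_\ell(\O_{\X(\A)})=-(\ell-1)!\,\chihom(\A,h,h-k)$.

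There is no real obstacle: the content of the proposition is entirely contained in Theorem~\ref{thm:intcycle} together with the standard translation between fundamental classes and top Chern classes of structure sheaves for codimension-$\ell$ subvarieties. The only point requiring minor care is tracking the two sign changes (one from the Fulton formula, one from the homogeneity of $\chihom$) so that they combine to $-1$ rather than $+1$.
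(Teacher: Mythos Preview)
Your proposal is correct and follows essentially the same approach as the paper: the paper's proof consists entirely of the sentence ``Applying this to our situation gives the following, using Theorem~\ref{thm:intcycle},'' after recording the two facts from \cite[Ex.~15.3.6]{FultonBook}. You have simply made the sign bookkeeping explicit, which the paper leaves to the reader.
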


In terms of the exponential Chern character, by \cite[Ex.~15.1.2(c),~Ex.~15.2.16(a)]{FultonBook},
\begin{equation}\label{eq:chOX}
\ch(\O_{\X(\A)})=\chihom(\A,-h,k-h)+O(\set{h,k}^\ell),
\end{equation}
where $O(\set{h,k}^\ell)$ denotes a polynomial in $A^\bullet\otimes_\Z\Q$ whose monomials are all of total degree strictly greater than $\ell$.

\subsection{The Hilbert polynomial}

For a bigraded $S$-module $M$, let $p_M(p,q)$ denote its Hilbert polynomial: i.e., $p_M(a,b)={\dim_\C}M_{a,b}$ for integers $a,b\gg0$. 
We refer to the classic paper of van der Waerden~\cite{vdW28} for properties of Hilbert series and Hilbert polynomials of bigraded modules. 
In particular, the (total) degree of $p_M(a,b)$ equals $\dim_S M-2$.

In this section, fix an arrangement $\A$ and set $I=I(\A)$ and $\X=\X(\A)$. 
It turns out that the asymptotic behaviour of the Hilbert polynomial of $S/I$ is combinatorially determined.  

\begin{prp}\label{prop:hpoly}
If $\A$ is a rank-$\ell$ arrangement of $n\geq2$ hyperplanes, then
\begin{equation}\label{eq:hilbertpoly}
p_{S/I}(p,q)=\frac1{(n-2)!}\sum_{i=1}^\ell t_{i0}{n-2\choose i-1}p^{i-1}q^{n-1-i}\;+\;\Omega(\set{p,q}^{n-2}),
\end{equation}
where $\Omega(\set{p,q}^{n-2})$ denotes a polynomial in $p$ and $q$ of
total degree strictly less than $n-2$.
\end{prp}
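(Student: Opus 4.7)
The plan is to realize $p_{S/I}(p,q)$ as an Euler characteristic of a twisted structure sheaf on $Y := \P V \times \P^{n-1}$ and then to extract its leading part via Hirzebruch-Riemann-Roch. I will continue the convention of the preceding sections and assume $\A$ is essential, so that $\ell = m$ and $\dim\X = n-2$.

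First, I would view $\O_\X$ as a coherent sheaf on the smooth variety $Y$. Serre vanishing applied to the sheaves $\O_\X(p,q)$ for $p,q\gg 0$ (together with the translation from bigraded modules to sheaves, cf.~\cite{vdW28}) shows that the higher cohomology of $\O_\X(p,q)$ vanishes and that $H^0(Y,\O_\X(p,q))=(S/I)_{p,q}$, so
$$p_{S/I}(p,q)\;=\;\chi(Y,\O_\X(p,q))$$
as polynomials in $p,q$. Applying Hirzebruch-Riemann-Roch on $Y$ gives
$$\chi(Y,\O_\X(p,q))\;=\;\int_Y\ch(\O_\X)\cdot e^{ph+qk}\cdot\td(Y),$$
where $\int_Y$ picks out the coefficient of $h^{\ell-1}k^{n-1}$.

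Next, I would isolate the terms of total $(p,q)$-degree exactly $n-2$. Proposition~\ref{prop:chern_l} (equivalently~\eqref{eq:chOX}) says $\ch_j(\O_\X)=0$ for $j<\ell$, and by~\eqref{eq:chOX} together with~\eqref{eq:Thomog} the degree-$\ell$ part of $\ch(\O_\X)$ equals $\sum_{i=1}^\ell t_{i0}h^{\ell-i}k^i$. Since the three contributing degrees must sum to $m+n-2=\ell+(n-2)$, monomials of $(p,q)$-degree equal to $n-2$ can only arise from pairing this degree-$\ell$ piece of $\ch(\O_\X)$ against $(ph+qk)^{n-2}/(n-2)!$, contracted with the $\td_0(Y)=1$ summand of the Todd class. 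A binomial expansion of $(ph+qk)^{n-2}$, followed by reading off the coefficient of $h^{\ell-1}k^{n-1}$ in the product, produces precisely the sum asserted in~\eqref{eq:hilbertpoly}.

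All remaining pairings—higher-codimension parts of $\ch(\O_\X)$, nontrivial Todd contributions, or smaller powers of $(ph+qk)$—give polynomials in $(p,q)$ of total degree strictly less than $n-2$ and are absorbed into the $\Omega(\{p,q\}^{n-2})$ remainder. The only real obstacle I foresee is the degree bookkeeping: one must verify that no cross-term can produce a spurious contribution of degree $n-2$, but this is forced immediately by the constraint that the three degrees sum to the ambient dimension $m+n-2$, leaving no slack.
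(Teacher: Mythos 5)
Your proposal is correct and follows essentially the same route as the paper: both identify $p_{S/I}(p,q)$ with $[h^{\ell-1}k^{n-1}]\,\ch(\O_{\X})e^{ph+qk}\td(T_{\P V\times\P^{n-1}})$ via Hirzebruch--Riemann--Roch, then use \eqref{eq:chOX} (vanishing of $\ch$ below degree $\ell$ and its degree-$\ell$ part $\chihom(\A,-h,k-h)$) together with $\td_0=1$ and the binomial expansion of $(ph+qk)^{n-2}$ to extract the leading terms. Your explicit Serre-vanishing justification of $p_{S/I}(p,q)=\chi(Y,\O_\X(p,q))$ is just a spelled-out version of the paper's citation of HRR and \cite[Exc.~III.5.2]{Har77}.
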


\begin{proof}
By the Hirzebruch-Riemann-Roch formula (see also \cite[Exc.~III.5.2]{Har77}),
\[
p_{S/I}(a,b)=\int\ch(\O_\X(a,b))\td(T_{\P V\times\P^{n-1}}),
\]
for all nonnegative integers $a,b$. 
Then
\begin{equation}\label{eq:hp2}
p_{S/I}(p,q)=[h^{\ell-1}k^{n-1}]\ch(\O_\X)e^{ph+qk}\td(T_{\P V\times\P^{n-1}}),
\end{equation}
where $[h^ik^j](-)$ denotes the coefficient of $h^ik^j$ in an 
element of the ring $A^\bullet[p,q]$.  This is a polynomial of degree $\dim\X=n-2$ in $p$ and $q$; in this proof, we will refer to the gradings in $A^\bullet$ and variables $p,q$ as the $hk$-degree and $pq$-degrees, respectively.

Consider the product expansion of \eqref{eq:hp2}. 
Terms in the middle factor have matching $pq$- and $hk$-degrees.
In order to obtain a term in the product of 
$hk$-degree $n+\ell-2$ and $pq$-degree $n-2$, then, only the least nonzero terms of $\ch(\O_\X)$ and $\td(T_{\P V\times\P^{n-1}})$ may appear, by \eqref{eq:chOX}. 
As the Todd polynomial has constant term $1$ and using \eqref{eq:Thomog}, the highest degree term of $p_{S/I}(p,q)$ can be written
\begin{align*}
[h^{\ell-1}k^{n-1}]\chihom(\A,-h,k-h)e^{ph+qk}
&=\sum_{i=1}^\ell t_{i0}[h^{\ell-1-(\ell-i)}k^{n-1-i}]\frac{(ph+qk)^{n-2}}{(n-2)!}\\
&=\frac{1}{(n-2)!}\sum_{i=1}^\ell t_{i0}{n-2\choose i-1}p^{i-1}q^{n-1-i},
\end{align*}
and the claim follows.
\end{proof}

\begin{rmk}
If $\A$ is a Boolean arrangement of rank $\ell$, then $S/I\cong R$.  So $p_{S/I}(p,q)=0$, and the Hilbert series is $h(S/I;t,u)=(1-t)^{-\ell}$.
\end{rmk}

\subsection{The Hilbert series}

In this section, we assume that $\A$ is not Boolean, to avoid the degenerate special case.
The result from the previous section may also be expressed in terms of the Hilbert series.  
From \cite[Thm.~7]{vdW28}, the Hilbert series of $S/I$ can be written as
\[
h(S/I;t,u)=\sum_{i=0}^\ell\frac{g_i(t,u)}{(1-t)^i(1-u)^{n-i}}
\]
for some polynomials $g_i(t,u)$, $0\leq i\leq \ell$.  By means of a 
partial fractions expansion, the series $h(S/I;t,u)$ may also be written as
\begin{equation}\label{eq:genericHS}
h(S/I;t,u)=\sum_{\substack{i,j\geq0\\i+j\leq n}}
\frac{c_{ij}}{(1-t)^i(1-u)^j}
\end{equation}
where the coefficients $\set{c_{ij}}$ are integers for $i,j\geq1$,
$c_{0j}\in\Z[t]$ for $j\geq1$, $c_{i0}\in\Z[u]$ for $i\geq1$, and
$c_{00}\in\Z[t,u]$.

The terms in this expansion of highest pole order are combinatorially
determined:
\begin{thm}\label{th:HS}
For any arrangement $\A$ of $n$ hyperplanes,
\begin{equation}\label{eq:HS}
h(S/I;t,u)=\sum_{i=1}^{\ell}
\frac{t_{i0}}{(1-t)^i(1-u)^{n-i}}+\Omega(\{(1-t)^{-1},(1-u)^{-1}\}^{n}).
\end{equation}
\end{thm}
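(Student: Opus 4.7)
The strategy is to read off the top-pole part of $h(S/I;t,u)$ from Proposition~\ref{prop:hpoly} by matching, monomial by monomial, the polynomial extracted from the partial fraction expansion \eqref{eq:genericHS} with the Hilbert polynomial of $S/I$.

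First I would expand each interior summand: for $i,j\ge 1$,
\[
[t^pu^q]\frac{1}{(1-t)^i(1-u)^j}=\binom{p+i-1}{i-1}\binom{q+j-1}{j-1},
\]
which is a polynomial in $(p,q)$ of total degree $i+j-2$ with leading monomial $p^{i-1}q^{j-1}/((i-1)!(j-1)!)$. The boundary summands of \eqref{eq:genericHS}, whose numerators $c_{0j}(t)$, $c_{i0}(u)$, $c_{00}(t,u)$ are polynomials in one or two variables, contribute zero to $[t^pu^q]h(S/I;t,u)$ once $p$ or $q$ exceeds the degree of the relevant numerator. Consequently, for $p,q\gg 0$,
\[
p_{S/I}(p,q)=\sum_{\substack{i,j\ge 1\\ i+j\le n}}c_{ij}\binom{p+i-1}{i-1}\binom{q+j-1}{j-1},
\]
whose homogeneous component of total degree $n-2$ is $\sum_{i=1}^{n-1}\frac{c_{i,n-i}}{(i-1)!(n-i-1)!}\,p^{i-1}q^{n-i-1}$.

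Next I would compare with Proposition~\ref{prop:hpoly}, which expresses the same top component as $\sum_{i=1}^{\ell}\frac{t_{i0}}{(i-1)!(n-i-1)!}\,p^{i-1}q^{n-i-1}$, using the identity $\binom{n-2}{i-1}/(n-2)!=1/((i-1)!(n-i-1)!)$. Coefficient-wise comparison, together with the vanishing $t_{i0}=0$ for $i>\ell$, forces $c_{i,n-i}=t_{i0}$ for every $1\le i\le n-1$. Substituting back into \eqref{eq:genericHS} collects the summands of maximal total pole order $n$ into exactly $\sum_{i=1}^{\ell}\frac{t_{i0}}{(1-t)^i(1-u)^{n-i}}$; the remainder, consisting of summands with $i+j<n$ together with the boundary polynomial parts, is absorbed into $\Omega(\{(1-t)^{-1},(1-u)^{-1}\}^n)$ by definition, yielding \eqref{eq:HS}.

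The main obstacle is purely organizational: one must verify that the boundary summands and the summands with $i+j<n$ genuinely cannot contribute to the top-degree component of $p_{S/I}(p,q)$, so that the coefficients $c_{i,n-i}$ are uniquely pinned down by Proposition~\ref{prop:hpoly}. The non-Boolean hypothesis is used to guarantee that this top component is nontrivial (so that the leading behavior has degree exactly $n-2$). Once these points are in place, the proof reduces to the binomial identity above and a routine monomial comparison.
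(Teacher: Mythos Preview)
Your proposal is correct and follows essentially the same approach as the paper: expand the partial-fraction form \eqref{eq:genericHS} binomially, isolate the top-degree homogeneous component of $p_{S/I}(p,q)$, and match it against Proposition~\ref{prop:hpoly} to read off $c_{i,n-i}=t_{i0}$, then invoke $t_{00}=0$ and $t_{i0}=0$ for $i>\ell$. Your treatment is in fact slightly more explicit than the paper's on two points (the vanishing of the boundary summands for $p,q\gg0$ and the identity $\binom{n-2}{i-1}/(n-2)!=1/((i-1)!(n-i-1)!)$), but the argument is otherwise the same.
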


\begin{proof}
Via the binomial expansion, \eqref{eq:genericHS} becomes
\begin{equation}\label{eq:HSexpanded}
h(S/I;t,u)=\sum_{\substack{i,j,p,q\geq0\\i+j\leq n}}
c_{ij}\binom{p+i-1}{p}\binom{q+j-1}{q} t^pu^q.
\end{equation}
For any $k$, the highest degree term of $\binom{p+k}{p}$ (as a polynomial in $p$) equals $1/k!p^k$.  Then, for each $i$, $1\leq i\leq n-1$, the coefficient of $pq$-degree $(i-1,n-i-1)$ in \eqref{eq:HSexpanded}
equals $c_{i,n-i}/((i-1)!(n-i-1)!)$.  However, this coefficient also equals $t_{i0}\binom{n-2}{i-1}/(n-2)!$, by \eqref{eq:hilbertpoly}, so $c_{i-1,n-i-1}=t_{i0}$.  The argument is completed by noting that
$t_{00}=0$ and $t_{i0}=0$ for $i>\ell$.
\end{proof}

\begin{cor}\label{cor:tutte}
The formal power series 
\[
(1-t+st(1-t))^{n}h(S/I;t,t-st(1-t))
\]
is a polynomial in $s$ and $t$.  Its evaluation at $t=1$ is $(-1)^\ell\chi(\A,-s)$.
\end{cor}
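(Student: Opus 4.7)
The plan is to substitute $u = t - st(1-t)$ into the structural description of $h(S/I;t,u)$ from Theorem~\ref{th:HS}, exploiting the key identity
\[
1 - u \;=\; 1 - t + st(1-t) \;=\; (1-t)(1+st).
\]
This in turn gives $(1-t+st(1-t))^n = (1-t)^n(1+st)^n$ and $(1-u)^j = (1-t)^j(1+st)^j$, so the bivariate pole structure at $(t,u) = (1,1)$ collapses onto the single locus $t = 1$, where the external prefactor is exactly calibrated to clear the worst pole appearing in \eqref{eq:HS}.

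Granting this, I would treat the two pieces of \eqref{eq:HS} in turn. Each main summand $t_{i0}/\bigl((1-t)^i(1-u)^{n-i}\bigr)$ becomes $t_{i0}/\bigl((1-t)^n(1+st)^{n-i}\bigr)$, and multiplication by $(1-t)^n(1+st)^n$ leaves the polynomial $t_{i0}(1+st)^i$. A typical summand of the $\Omega\bigl(\{(1-t)^{-1},(1-u)^{-1}\}^n\bigr)$ remainder, of the shape $c_{ij}/\bigl((1-t)^i(1-u)^j\bigr)$ with $i+j < n$ and $c_{ij}$ of the type permitted in \eqref{eq:genericHS}, transforms in the same way to $c_{ij}(1-t)^{n-i-j}(1+st)^{n-j}$, a polynomial in $s, t$ carrying a strictly positive power of $(1-t)$. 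Summing, the product $(1-t+st(1-t))^n h(S/I; t, t-st(1-t))$ is a polynomial in $s$ and $t$, and setting $t = 1$ kills every remainder contribution, leaving
\[
\sum_{i=1}^{\ell} t_{i0}(1+s)^i.
\]
Since $t_{00} = 0$ and $t_{i0} = 0$ for $i > \ell$, this sum is exactly $T_\A(1+s, 0)$, which by \eqref{eq:tutte} equals $(-1)^\ell \chi(\A, -s)$.

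The Boolean case is not covered by the running hypothesis of the subsection but will be easy to dispatch from the remark following Proposition~\ref{prop:hpoly}: there $n = \ell$ and $h(S/I; t, u) = (1-t)^{-\ell}$, so the product reduces to $(1+st)^\ell$, and its value at $t = 1$ is $(1+s)^\ell = (-1)^\ell(-1-s)^\ell = (-1)^\ell \chi(\A, -s)$. The only real technical concern in the main case is to confirm that every summand of the $\Omega(\cdot)$ remainder in Theorem~\ref{th:HS} really has total pole order strictly less than $n$, so that its transform acquires a factor of $(1-t)$; but this is precisely what that theorem asserts, and once it is invoked the proof of Corollary~\ref{cor:tutte} is the mechanical substitution above.
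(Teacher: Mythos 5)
Your argument is correct and is essentially the paper's own proof: both substitute $u=t-st(1-t)$ into Theorem~\ref{th:HS}, use the factorization $1-u=(1-t)(1+st)$ so that the leading terms yield $T_\A(1+st,0)$ while every remainder term of total pole order strictly less than $n$ acquires a positive power of $(1-t)$, and then set $t=1$ and apply \eqref{eq:tutte}. Your separate treatment of the Boolean case is fine but unnecessary, since the running hypothesis of the subsection already excludes it.
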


\begin{proof}
By Theorem~\ref{th:HS}, 
\[
(1-u)^n h(S/I;t,u)=T_\A(\frac{1-u}{1-t},0)+(1-u)^nQ(t,u),
\]
where $Q(t,u)$ is some formal power series with (total) pole order
strictly less than $n$.

Now apply the change of variables
$t\mapsto t$, $u\mapsto t-st(1-t)$.  Since $1-u\mapsto (1-t)(1+st)$,
we see $(1-t)^{n-1} Q(t,t-st(1-t))$ is, in fact, a polynomial.
Then the first claim follows by writing out the substitution:
\begin{multline*}
(1+t-st(1-t))^nh(S/I,t,t-st(1-t)) = \\ T_\A(1+st,0)+(1-t)^n(1+st)^nQ(t,t-st(1-t)).
\end{multline*}
Since the second summand is a polynomial divisible by $1-t$, the 
second claim follows by setting $t=1$ and using \eqref{eq:tutte}.
\end{proof}

\begin{exa}
Let $\A$ be arrangement of Example \ref{ex:3lines}.  Here,  $\chi(t)=(t-1)(t-2)$ and $T_{\A}(x,y)=x^2+x+y$: by 
Theorem~\ref{thm:intcycle} we find $[\X(\A)]=kh+k^2$. By direct computation, we have
\begin{align*}
\quad p_{S/I}(p,q)&=p+q+1,\\
h(S/I,t,u)&=\frac{1}{(1-t)(1-u)^2}+\frac{1}{(1-t)^2(1-u)}-\frac{1}{(1-t)(1-u)},
\end{align*}
where the leading parts are predicted by Proposition~\ref{prop:hpoly} and Corollary~\ref{cor:tutte}.
\end{exa}

\begin{exa}\label{ex:Yu93}
In \cite[Ex.~8.7]{Zi89}, Ziegler found a pair of rank-$3$ arrangements $\A_1$ and $\A_2$ with isomorphic intersection lattices, for which $\Omega^1(\A_i)$ have different Hilbert series, for $i=1,2$.  Viewed in
$\P^2$, these are arrangements of $9$ lines with six triple points.
We will use the realizations of Yuzvinsky~\cite[Ex.~2.2]{Yu93}. 
Schenck and Toh\v{a}neanu considered the same example as \cite[Ex.~1.4]{ShTo09}, and they observed that the triple points
of the one arrangement lie on a conic, while those of the other do not.
The Tutte polynomial of either specializes to
\[
 T_{\A_i}(x,0)=x^3+6x^2+15x.
\]
Computation with \cite{M2} shows that the respective Hilbert series of $S/I(\A_i)$ are:
 \begin{align*}
h_1 &= \frac{1-6t^5u+4t^6u+t^6u^2}{(1-t)^3(1-u)^8}\text{~and}\\
h_2 &= \frac{1-t^4u-3t^5u+t^6u+t^6u^2+t^7u}{(1-t)^3(1-u)^8}.
\end{align*}
Then
\[
h_i=\frac{15}{(1-t)(1-u)^8}+\frac{6}{(1-t)^2(1-u)^7}+\frac{1}{(1-t)^3(1-u)^6}+
\Omega_i
\]
for $i=1,2$; however, the tails differ: $h_1-h_2=\Omega_1-\Omega_2=t^4u/(1-u)^8$.

Since these arrangements are tame, the defining ideals are radical by \cite[Cor.~3.8]{CDFV09}. 
Since their respective Hilbert series differ, $\X(\A_1)\not\cong\X(\A_2)$ as subvarieties of $\P^2\times\P^8$.
\end{exa}

\subsection{Solomon and Terao's formula}

If $\A$ is a tame arrangement, we can obtain another formula for $\ch(\O_{\X(\A)})$ and compare with the identity \eqref{eq:chOX}.
To begin, recall the following (using $[\cdot,\cdot]$ to denote a shift of bidegrees):

\begin{thm}[{\cite[Thm~3.5]{CDFV09}}]
For any tame arrangement $\A$, the complex
\begin{gather}
\xymat@C-0.7em{
0\ar[r] &
\Omega^0_{S/\kk}(\A)[0,-\ell] \ar[r]^-{\omega_\ba} &
\Omega^1_{S/\kk}(\A)[0,1-\ell] \ar[r]^-{\omega_\ba} &
\cdots}\nonumber\\
\xymat@C-0.7em{
&\cdots \ar[r]^-{\omega_\ba} &
\Omega^{\ell-1}_{S/\kk}[0,-1]\ar[r]^{\omega_\ba} &
\Omega^\ell_{S/\kk}[0,0]\ar[r] & (S/I)[n-\ell,0]\ar[r] & 0
}\label{eq:logcplx}
\end{gather}
is an exact complex of bigraded $S$-modules.
\end{thm}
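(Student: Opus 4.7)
The plan is to combine generic Koszul-type exactness over the arrangement complement with an acyclicity criterion driven by the tameness hypothesis. First I would verify that $\omega_\ba\wedge(-)$ is a well-defined differential $\Omega^p_{S/\kk}(\A)\to\Omega^{p+1}_{S/\kk}(\A)$ of bigraded $S$-modules, and that $\omega_\ba\wedge\omega_\ba=0$ makes the sequence a complex. The augmentation $\Omega^\ell_{S/\kk}(\A)\to S/I$ at the right end of the complex has image precisely $I(\A)$: applying the identity $\omega_\ba\wedge(\theta\lrcorner\eta)=\langle\theta,\omega_\ba\rangle\eta$ to a local generator $\eta$ of the rank-one top logarithmic module, together with the definition $I(\A)=\langle D(\A),\omega_\ba\rangle$ from Theorem~\ref{th:closure}, gives the augmentation target.

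Next I would establish exactness on a suitable open set and then globalize. Restricted to $M\times\C^{\A}$ (where $M=V\setminus\bigcup H_i$), the logarithmic forms become ordinary K\"ahler differentials localized at $f$, and $\omega_\ba$, whose coefficients are independent parameters $a_i$, becomes a generic nonvanishing $1$-form on this smooth variety. Hence the complex becomes a Koszul complex on a generically nonvanishing section, which is exact on the complement of a locus of codimension at least $\ell$ in $V\times\C^{\A}$. To pass from this to exactness of the whole complex, I would apply the Buchsbaum--Eisenbud acyclicity criterion: the tameness hypothesis is the statement that $\pd_R\Omega^p(\A)\leq p$, equivalently $\depth_R\Omega^p(\A)\geq m-p$ by Auslander--Buchsbaum, and these depth bounds persist after the flat base change $R\leadsto S=R\otimes_\C C$ since $C$ is polynomial over $\C$. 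The rank data at each spot is forced by the exactness already established on the open set, so the acyclicity criterion produces exactness globally.

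The main obstacle I foresee is the careful bookkeeping linking the codimension of the non-exact locus to the depth bounds coming from tameness: one needs that the codimension of the locus where the generic Koszul exactness breaks down is high enough, for each $p$, to match the depth $m-p+\dim C$ produced by tameness. A secondary, more technical issue is the bigrading: the differential $\omega_\ba$ has bidegree $(0,1)$ in the $(R,C)$ grading, producing the shifts $[0,p-\ell]$ in the complex, and one must keep both gradings straight throughout rather than collapsing to a single polynomial ring.
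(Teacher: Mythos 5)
Note first that the paper itself gives no proof of this statement: it is quoted from \cite[Thm.~3.5]{CDFV09}, so your attempt can only be measured against the argument in that reference, which indeed runs along the general lines you sketch (generic Koszul exactness plus a depth/acyclicity argument powered by tameness). Within that outline, however, there are two genuine gaps. The crucial one is your claim that the complex is ``exact on the complement of a locus of codimension at least $\ell$ in $V\times\C^{\A}$.'' Restricting to $M\times\C^{\A}$ only removes the divisor $V(f)\times\C^{\A}$, which has codimension one; what the Koszul picture over $M\times\C^{\A}$ gives you is that the interior cohomology modules of \eqref{eq:logcplx} are annihilated by a power of $f$, i.e.\ supported in codimension $\geq 1$, not $\geq\ell$. (Also, $\oma$ is not nonvanishing there: its zero locus is $\Sigma(\A)$, of codimension $\ell$ in $M\times\C^{\A}$; exactness of the Koszul complex away from the top spot holds because that zero locus has the expected codimension.) Bridging the gap from codimension one to codimension $\ell$ is exactly where the work lies: in \cite{CDFV09} this is done by an induction over localized arrangements --- for $x\in V(f)\setminus\{0\}$ the localized complex is identified with the complex of $\A_x$, and tameness is inherited by localization --- so that the interior cohomology is shown to be supported over $\{0\}\times\C^{\A}$, and only then do the depth bounds coming from $\pd_R\Omega^p(\A)\leq p$ suffice (in the essential case the support has codimension $m=\ell$, matching $\depth\geq m-p$ plus the $C$-variables). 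Your proposal acknowledges this ``bookkeeping'' as an obstacle but offers no mechanism for it, and the generic Koszul argument alone cannot supply it.

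The second gap is the choice of acyclicity criterion. The Buchsbaum--Eisenbud theorem applies to complexes of finitely generated \emph{free} modules, with hypotheses phrased in terms of ranks and depths of ideals of minors; the modules $\Omega^p(\A)\otimes_\C C$ are not free unless $\A$ is free (in which case the statement reduces to the easy Koszul-complex-of-a-regular-sequence case, by \cite[Thm.~2.13]{CDFV09}), and ``rank data forced by exactness on the open set'' does not make sense for non-free terms. The correct tool is the Peskine--Szpiro acyclicity lemma (or an equivalent induction on $\dim S_{\p}$): if the $i$-th term from the right has depth $\geq i$ at a prime and the interior cohomology there is zero or of depth zero, the interior cohomology vanishes --- and this is applied only after the support of the cohomology has been pinned down as above, since at height-one primes along $V(f)$ the tameness depth bounds are simply too weak. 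Your identification of the augmentation, via $\oma\wedge(\theta\lrcorner\eta)=\langle\theta,\oma\rangle\eta$ and the duality $\Omega^{\ell-1}(\A)\cong D(\A)\lrcorner\eta$, and your handling of the bigrading and flat base change $R\rightsquigarrow S$, are fine.
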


We may replace \eqref{eq:logcplx} by the following, using the identity $D_p(\A)\cong \Omega^{\ell-p}[\ell-n]$:

\begin{equation}\label{eq:Dcplx}
\xymat@C-1em{
0\ar[r]&D^{S/C}_\ell(\A)[0,-\ell]\ar[r] & \cdots\ar[r] &
D_1^{S/C}(\A)[0,-1]\ar[r] & D_0^{S/C}(\A)\ar[r] & S/I\ar[r] &0,
}
\end{equation}
where the differential is contraction along $\omega_\ba$.  Noting that
$D_p^{S/C}(\A)=D_p(\A)\otimes_R C$ for $0\leq p\leq \ell$, we obtain:

\begin{cor}\label{cor:Dcplx}
If $\A$ is a tame arrangement, then the bigraded Hilbert series of $S/I$ is given by
\[
h(S/I,t,u)=P_\A(t,-u)/(1-u)^{n},
\]
where $P_\A(t,u)$ is defined in \eqref{eq:defP}.
\end{cor}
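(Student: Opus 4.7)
The plan is to deduce the identity directly from the exact complex~\eqref{eq:Dcplx} by computing the bigraded Hilbert series as an Euler characteristic. Because the complex is an exact sequence of bigraded $S$-modules terminating in $S/I$, the alternating sum of the bigraded Hilbert series of its terms vanishes, which yields
\[
h(S/I,t,u)=\sum_{p=0}^{\ell}(-1)^p h\bigl(D_p^{S/C}(\A)[0,-p],t,u\bigr).
\]
This reduces the problem to identifying each summand on the right with the appropriate term of $P_\A(t,-u)/(1-u)^n$.

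Next, I would handle the two ingredients of each summand. First, the bidegree shift $[0,-p]$ contributes a factor of $u^p$ to the Hilbert series, so $h(D_p^{S/C}(\A)[0,-p],t,u)=u^p\,h(D_p^{S/C}(\A),t,u)$. Second, since $D_p^{S/C}(\A)=D_p(\A)\otimes_R C$ and the variables $a_1,\ldots,a_n$ of $C$ carry $u$-degree $1$ while $D_p(\A)$ sits purely in $u$-degree $0$, the Hilbert series factors as
\[
h(D_p^{S/C}(\A),t,u)=\frac{h(D_p(\A),t)}{(1-u)^n}.
\]
Combining these observations, the alternating sum collapses to
\[
h(S/I,t,u)=\frac{1}{(1-u)^n}\sum_{p=0}^{\ell}(-u)^p\,h(D_p(\A),t)=\frac{P_\A(t,-u)}{(1-u)^n},
\]
using the definition \eqref{eq:defP} of $P_\A$ in the last step (and noting that the sum can be extended to $p=m$ since $D_p(\A)$ has $u$-grading $0$ outside the specified range, or equivalently that $D_p(\A)=0$ for $p>\ell$ when $\A$ is essential and otherwise the same bookkeeping applies).

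The main obstacle is primarily bookkeeping: tracking the bigrading convention for the shift $[0,-p]$ and verifying that the differential $\omega_\ba$ has $(t,u)$-bidegree $(0,1)$ so that the shifts render the complex degree-preserving, which is what justifies taking the Euler characteristic of Hilbert series termwise. Once this is set up correctly, no further arrangement-theoretic input is required beyond the exactness guaranteed by the tame hypothesis.
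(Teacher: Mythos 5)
Your proposal is correct and coincides with the paper's (essentially implicit) argument: the corollary is read off from the exact complex \eqref{eq:Dcplx} by taking the alternating sum of bigraded Hilbert series, with the shift $[0,-p]$ giving the factor $u^p=(-1)^p(-u)^p$ and $D_p^{S/C}(\A)\cong D_p(\A)\otimes_\C C$ giving the factor $(1-u)^{-n}$, exactly as you do. The bookkeeping points you flag (the bidegree $(0,1)$ of contraction with $\omega_\ba$ and the range of summation in \eqref{eq:defP}) are the only details the paper suppresses, and you handle them correctly.
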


\begin{rmk}
If $\A$ is a tame arrangement, or any arrangement for which the complex \eqref{eq:logcplx} is exact, then Solomon and Terao's formula for $\A$ (Theorem~\ref{th:st}) is a consequence of Corollary~\ref{cor:Dcplx} together with Corollary~\ref{cor:tutte}.  
It is not known if \eqref{eq:logcplx} is exact for all arrangements.
\end{rmk}


\bibliographystyle{amsalpha}
\providecommand{\bysame}{\leavevmode\hbox to3em{\hrulefill}\thinspace}
\providecommand{\MR}{\relax\ifhmode\unskip\space\fi MR }
\providecommand{\MRhref}[2]{%
  \href{http://www.ams.org/mathscinet-getitem?mr=#1}{#2}
}
\providecommand{\href}[2]{#2}

\end{document}